\tikzset{vertex/.style = {
    draw,
    thick,
    circle,
    fill=black,
    inner sep = 0,
    minimum size=3mm,
    node distance = 2cm
}}
\tikzset{
  % style to apply some styles to each segment of a path
  on each segment/.style={
    decorate,
    decoration={
      show path construction,
      moveto code={},
      lineto code={
        \path [#1]
        (\tikzinputsegmentfirst) -- (\tikzinputsegmentlast);
      },
      curveto code={
        \path [#1] (\tikzinputsegmentfirst)
        .. controls
        (\tikzinputsegmentsupporta) and (\tikzinputsegmentsupportb)
        ..
        (\tikzinputsegmentlast);
      },
      closepath code={
        \path [#1]
        (\tikzinputsegmentfirst) -- (\tikzinputsegmentlast);
      },
    },
  },
  % style to add an arrow in the middle of a path
  mid arrow/.style={postaction={decorate,decoration={
        markings,
        mark=at position .5 with {\arrow[#1]{stealth}}
      }}},
}
\tikzset{directed/.style = {postaction={on each segment={mid arrow=black}}
}}
\newcommand{\Mod}[1]{\ (\mathrm{mod}\ #1)}
\theoremstyle{plain}
\newtheorem{thm}{Theorem} % reset theorem numbering for each chapter
\theoremstyle{definition}
\newtheorem{defn}[thm]{Definition} % definition numbers are dependent on theorem numbers
\theoremstyle{proposition}
\theoremstyle{conjecture}
\theoremstyle{lemma}
\newtheorem{lema}[thm]{Lemma} % definition numbers are dependent on theorem numbers
\theoremstyle{corolario}
\theoremstyle{observacao}
\newtheorem{obs}[thm]{Observation} % definition numbers are dependent on theorem numbers
\theoremstyle{problema}
\numberwithin{equation}{section} % Number equations within sections (i.e. 1.1, 1.2, 2.1, 2.2 instead of 1, 2, 3, 4)
\numberwithin{figure}{section} % Number figures within sections (i.e. 1.1, 1.2, 2.1, 2.2 instead of 1, 2, 3, 4)
\numberwithin{table}{section} % Number tables within sections (i.e. 1.1, 1.2, 2.1, 2.2 instead of 1, 2, 3, 4)
\newcommand{\horrule}[1]{\rule{\linewidth}{#1}} % Create horizontal rule command with 1 argument of height
\title{	
\normalfont \normalsize 
\textsc{University of Campinas} \\ [25pt] % Your university, school and/or department name(s)
\horrule{0.5pt} \\[0.4cm] % Thin top horizontal rule
\huge  Two novel results on the existence of $3$-kernels in digraphs \\ % The assignment title
\horrule{2pt} \\[0.5cm] % Thick bottom horizontal rule
}
\author{Alonso Ali and Orlando Lee} % Your name
\date{\normalsize\today} % Today's date or a custom date
\begin{document}
%\nocite{*}
\maketitle % Print the title

\newpage
\begin{abstract}

{\noindent{\Large Abstract} }\\

Let $D$ be a digraph. We call a subset $N$ of $V(D)$ \textbf{$k$-independent} if for every pair of vertices $u,v \in N$, $d(u,v) \geq k$; and we call it \textbf{$\ell$-absorbent} if for every vertex $u \in V(D) \setminus N$, there exists $v \in N$ such that $d(u,v) \leq \ell$. A \textbf{$(k,\ell)$-kernel} of $D$ is a subset of vertices which is $k$-independent and $\ell$-absorbent. A \textbf{$k$-kernel} is a $(k,k-1)$-kernel.

In this report, we present the main results from our master's research regarding kernel theory. We prove that if a digraph $D$ is strongly connected and every cycle $C$ of $D$ satisfies:
\begin{enumerate}[(i)]
    \item if $C \equiv 0 \Mod 3$, then $C$ has a short chord,
    \item if $C \not \equiv 0 \Mod 3$, then $C$ has three short chords: two consecutive and a third crossing one of the former,
\end{enumerate} then $D$ has a $3$-kernel. Moreover, we introduce a modification of the substitution method, proposed by Meyniel and Duchet in 1983, for $3$-kernels and use it to prove that a quasi-$3$-kernel-perfect digraph $D$ is $3$-kernel-perfect if every circuit of length not dividable by three has four short chords. \\

\textbf{Key words:} kernel, k-kernel, chords
    
\end{abstract}

\newpage
\tableofcontents

%----------------------------------------------------------------------------------------
%	Section 1
%----------------------------------------------------------------------------------------

\newpage
\section{Introduction}

In this report, every digraph is loopless, without multiple edges and finite. The vertex set of a digraph $D$ is denoted by $V(D)$ and its arc set by $A(D)$. For every subdigraph $H$ of $D$, $D[H]$ is the subdigraph of $D$ induced by $V(D)$, and $D-H$ is the subdigraph of $D$ induced by $V(D) \setminus V(H)$. All walks, paths, circuits and cycles are considered to be directed. A digraph $D$ is \textbf{strongly connected} if for every pair of vertices $u,v \in V(D)$, there exists a $uv$-path in $D$. We call a cycle \textbf{even} (resp. \textbf{odd}) if its length is even (resp. odd). For undefined notation, we refer the reader to \cite{livro}.

A \textbf{chord} of a cycle $C = (c_0,\dots,c_{n-1},c_0)$ is an arc $a = (c_i,c_j)$, where $c_i,c_j \in V(C)$ but $a \not \in A(C)$. The \textbf{length} of the chord $a$ is defined as the distance from $c_i$ to $c_j$ in $C$. If the length of a chord is two, we call it a \textbf{short chord}. Let $\alpha = (\alpha_1, \alpha_2, \dots, \alpha_m)$ be a sequence of chords in $C$. Two chords $\alpha_i = (c_j,c_{j+k})$ and $\alpha_{i'} = (c_{j'},c_{j'+k'})$ are \textbf{crossed} if $j< j' < j+k < j'+k'$ (notation modulo $n$). The chords in $\alpha$ are \textbf{crossed} if, for every $i < m$, $\alpha_i$ crosses $\alpha_{i+1}$. Also, we say that $\alpha_i$ and $\alpha_j$ are \textbf{consecutive} if $j' = j+k$ (notation modulo $n$); and the chords in $\alpha$ are consecutive if, for every $i <m$, $\alpha_{i+1}$ is consecutive to $\alpha_i$.

A \textbf{kernel} of a digraph $D$ is a set $K \subseteq V(D)$ which is independent in $D$ and for every vertex $u \in V(D) \setminus K$, there exists an arc $(u,v)$, where $v \in K$. A digraph is \textbf{kernel-perfect} if every induced subdigraph has a kernel. We call a subset $N$ of $V(D)$ \textbf{$k$-independent} if for every pair of vertices $u,v \in N$, $d(u,v) \geq k$; and we call it \textbf{$\ell$-absorbent} if for every vertex $u \in V(D) \setminus N$, there exists $v \in N$ such that $d(u,v) \leq \ell$. A \textbf{$(k,\ell)$-kernel} of $D$ is a subset of vertices which is $k$-independent and $\ell$-absorbent. A \textbf{$k$-kernel} is a $(k,k-1)$-kernel. 

The concept of a kernel was introduced by von Neumann and Morgenstern in 1944 \cite{GAMETHEORY}, in the context of game theory, to model social and economic interactions. In light of its relation to the Strong Perfect Graph Conjecture (now the Strong Perfect Graph Theorem \cite{THESTRONGPERFECTGRAPHTHEOREM}), kernel theory gained a lot of attention and was thoroughly researched. When Kwaśnik proposed the concept of $k$-kernels \cite{KLKERNELS} and generalized Richardson's Theorem for $k$-kernels \cite{GENERALIZATIONOFRICHARDSONTHEOREM}, $k$-kernels became an interesting line of study in kernel theory.

In this report, we present the main results from our master's research regarding kernel theory. In Section 2, we prove that if a digraph $D$ is strongly connected and every cycle $C$ of $D$ satisfies:
\begin{enumerate}[(i)]
    \item if $C \equiv 0 \Mod 3$, then $C$ has a short chord,
    \item if $C \not \equiv 0 \Mod 3$, then $C$ has three short chords: two consecutive and a third crossing one of the former,
\end{enumerate} then $D$ has a $3$-kernel. In Section 3, we introduce a modification of the substitution method for $3$-kernels and use it to prove that a quasi-$3$-kernel-perfect digraph $D$ is $3$-kernel-perfect if every circuit of length not dividable by three has four short chords.

%----------------------------------------------------------------------------------------
%	Section 2
%----------------------------------------------------------------------------------------

\newpage
\section{A sufficient condition for the existence of $3$-kernels in digraphs}
\label{chapter2}

Let $D$ be a strongly connected digraph where every cycle $C$ satisfies:
\begin{enumerate}[(i)]
    \item if $C \equiv 0 \Mod 3$, then $C$ has a short chord,
    \item if $C \not \equiv 0 \Mod 3$, then $C$ has three short chords: two consecutive and a third crossing one of the former.
\end{enumerate}
In this section, we prove $D$ has a $3$-kernel.

An useful tool for demonstrating the existence of $3$-kernels in digraphs is Lemma \ref{lema:kkernelssekernel}, which states that a digraph has a $3$-kernel if, and only if, its $2$-closure has a kernel. Before we present Lemma \ref{lema:kkernelssekernel}, we introduce the definition of the $k$-closure of a digraph.

\begin{defn}
Let $D$ be a digraph. The \textbf{$k$-closure} of $D$, denoted by $C^k(D)$, is the digraph $D'$, where $V(D')=V(D)$ and $(u,v) \in A(D')$ if $d_D(u,v) \leq k$. Figure \ref{fig:digrafoe2fecho} depicts a digraph and its $2$-closure.
\end{defn}

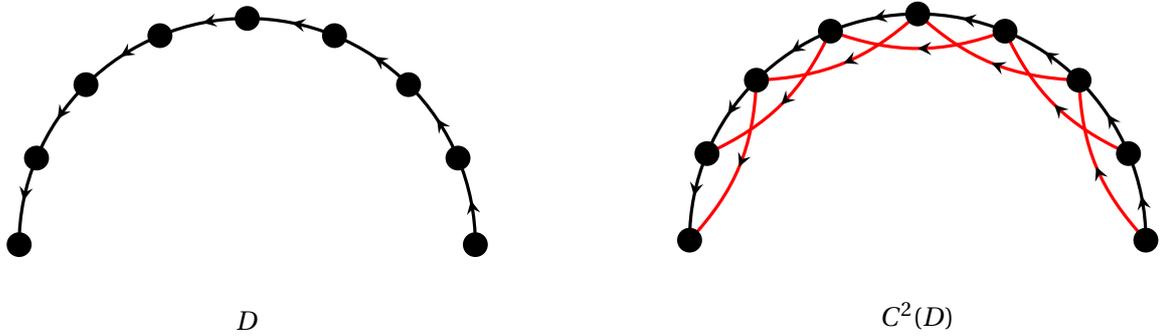
\begin{figure}[h]
    \centering
    \begin{subfigure}[b]{.4\textwidth}
        %\raisebox{ 20mm }
      \centering
      \begin{tikzpicture}[line width = 1.2]
            \def \n {8}
                      \def \namearray{{meem}}
                      \def\names{{"v_1","v_2","v_3","w_1","w_2","w_3","u_1","u_2","u_3"}}
                     
                      \def \radius {3cm}
                      \def \margin {0} % margin in angles, depends on the radius
                        
                        \foreach \s in {1,...,\n}
                        {
                          \node[vertex] (\s) at ({180/\n * (\s - 1)}:\radius) {};
                          
                          \draw[directed] ({180/\n * (\s - 1)+\margin}:\radius) 
                            arc ({180/\n * (\s - 1)+\margin}:{180/\n * (\s)-\margin}:\radius);
                        }
                        
                        \node[vertex] (\n+1) at (180:\radius) {};
                        
                        \node at (0, -1){$D$};
      \end{tikzpicture}
    \end{subfigure} \hfill 
    \begin{subfigure}[b]{.4\textwidth}
      \centering
      \begin{tikzpicture}[line width = 1.2]
            \def \n {8}
                      \def \namearray{{meem}}
                      \def\names{{"v_1","v_2","v_3","w_1","w_2","w_3","u_1","u_2","u_3"}}
                     
                      \def \radius {3cm}
                      \def \margin {0} % margin in angles, depends on the radius
                        
                        \foreach \s in {1,...,\n}
                        {

                          \draw[directed] ({180/\n * (\s - 1)+\margin}:\radius) 
                            arc ({180/\n * (\s - 1)+\margin}:{180/\n * (\s)-\margin}:\radius);
                        
                          \ifthenelse{\not \s = \n}{
                          \draw[directed,red] ({180/\n * (\s - 1)}:\radius) to [bend left= 20] ({180/\n * (\s +1)-\margin}:\radius);
                          };
                          
                          \node[vertex] (\s) at ({180/\n * (\s - 1)}:\radius) {};

                        }
                        
                        \node at (0, -1){$C^2(D)$};
                        
                        \node[vertex] (\n+1) at (180:\radius) {};
      \end{tikzpicture}
    \end{subfigure}
    \caption{An example of a digraph $D$ and its $2$-closure $C^2(D)$. Those arcs in $C^2(D)$ which are not in $D$ are painted red.}
    \label{fig:digrafoe2fecho}
\end{figure}

\begin{lema}[\cite{galeana2014existence}]
\label{lema:kkernelssekernel}
Let $k \geq 3$ be an integer. Let $D$ be a digraph and let $K \subseteq V(D)$. The subset of vertices $K$ is a $k$-kernel of $D$ if, and only if, $K$ is a kernel of $C^{(k-1)}(D)$.
\end{lema}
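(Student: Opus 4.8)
The plan is to prove both implications simultaneously by directly unfolding the definitions, observing that each of the two defining properties of a kernel of $C^{k-1}(D)$ translates, arc by arc, into one of the two defining properties of a $k$-kernel of $D$. Throughout I would write $D' = C^{k-1}(D)$ and recall that, by the definition of the $(k-1)$-closure, $(u,v) \in A(D')$ holds precisely when $d_D(u,v) \leq k-1$.

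First I would handle the independence condition. Saying that $K$ is independent in $D'$ means that no arc of $D'$ joins two vertices of $K$; equivalently, for every pair of distinct $u,v \in K$ we have $(u,v) \notin A(D')$. By the definition of $D'$, the statement $(u,v) \notin A(D')$ is the same as $d_D(u,v) > k-1$, i.e.\ $d_D(u,v) \geq k$. Quantifying over all such ordered pairs, this is exactly the assertion that $K$ is $k$-independent in $D$. The point to be careful about here is that independence in a digraph forbids an arc in either direction, and this matches the $k$-independence requirement that $d_D(u,v) \geq k$ hold for \emph{every} ordered pair $u,v \in K$; since distance is asymmetric, this is genuinely two inequalities per unordered pair, and the correspondence is faithful.

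Next I would handle the absorbency (domination) condition. Saying that $K$ is absorbent in $D'$ means that every vertex $u \in V(D') \setminus K$ has an out-neighbour in $K$, i.e.\ there exists $v \in K$ with $(u,v) \in A(D')$. Again by the definition of $D'$, such an arc exists if and only if $d_D(u,v) \leq k-1$, so the condition reads: for every $u \in V(D) \setminus K$ there is some $v \in K$ with $d_D(u,v) \leq k-1$, which is precisely the statement that $K$ is $(k-1)$-absorbent in $D$. Here I would use that $V(D') = V(D)$, so the sets of vertices to be absorbed coincide.

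Combining the two equivalences, $K$ is a kernel of $D' = C^{k-1}(D)$ (independent and absorbent) if and only if $K$ is $k$-independent and $(k-1)$-absorbent in $D$, that is, if and only if $K$ is a $(k,k-1)$-kernel, i.e.\ a $k$-kernel, of $D$. Since both directions drop out simultaneously from these equivalences, the argument closes. I do not expect a serious obstacle: the only real care needed is the faithful reading of \textbf{independent} and \textbf{absorbent} in $D'$ and their matching with the distance-based definitions in $D$; the hypothesis $k \geq 3$ is not essential to this reasoning (the same unfolding is valid whenever $k-1 \geq 1$) and simply fixes the regime relevant to the rest of the paper.
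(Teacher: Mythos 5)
Your proposal is correct. Note that the paper itself gives no proof of this lemma at all: it is imported from the cited reference (Galeana-S\'anchez et al.), so there is nothing internal to compare against. Your argument --- unfolding the definition of $C^{k-1}(D)$ so that independence in the closure becomes $k$-independence in $D$ and absorbency in the closure becomes $(k-1)$-absorbency in $D$ --- is the standard and complete proof of this equivalence, and your two side remarks (that independence must be read over ordered pairs because distance is asymmetric, and that $k \geq 3$ is inessential since the unfolding works whenever $k-1 \geq 1$) are both accurate.
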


A particularly useful theorem was proved by Duchet in 1980.
\begin{thm}[\cite{DUCHET}]
\label{thm:todocircuitoarcosimetrico}
If every cycle of a digraph $D$ has a symmetric arc, then $D$ is kernel-perfect.
\end{thm}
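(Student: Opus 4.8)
The plan is to reduce the statement to the existence of a single kernel and then iterate. First I would note that the hypothesis is hereditary: if $H$ is an induced subdigraph of $D$, then every cycle of $H$ is a cycle of $D$ (since $H$ is induced, all arcs of $D$ between vertices of $H$ are present), so each cycle of $H$ again has a symmetric arc. Hence it suffices to prove that \emph{every} digraph $H$ in which every cycle has a symmetric arc possesses a kernel; applying this to each induced subdigraph of $D$ then shows that $D$ is kernel-perfect.

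The core of the argument is the claim that such an $H$ has a vertex $v$ whose out-neighborhood is contained in its in-neighborhood, i.e.\ $N^+(v) \subseteq N^-(v)$ (equivalently, $\{v\}$ is a semikernel of $H$). I would prove this by contradiction. If no such vertex existed, then every vertex $u$ would have an out-neighbor $w$ with $(w,u) \notin A(H)$, that is, an \emph{asymmetric} out-arc. Selecting one such arc at every vertex and following them, $u_0 \to u_1 \to u_2 \to \cdots$, finiteness forces a repeated vertex and hence a directed cycle $C$ all of whose arcs are asymmetric. But then $C$ has no symmetric arc, contradicting the hypothesis. This is where the assumption enters essentially, and I expect it to be the main (albeit short) obstacle: one must arrange the arc-following so that the extracted cycle inherits the asymmetry of each of its arcs, and one must commit to the reading that ``$C$ has a symmetric arc'' means that some arc of $C$ is reversible (its reverse also lies in $A(H)$), which is exactly what the construction violates.

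With the claim available, I would conclude by induction on $|V(H)|$. The base case of a single vertex is trivial. For the inductive step, choose $v$ with $N^+(v) \subseteq N^-(v)$ and set $H' = H - (\{v\} \cup N^-(v))$; because $N^+(v) \subseteq N^-(v)$, this deletes every neighbor of $v$ in either direction. As an induced subdigraph, $H'$ still satisfies the hypothesis and has fewer vertices, so by induction it has a kernel $K'$. I would then check that $K = K' \cup \{v\}$ is a kernel of $H$: it is independent, since all in- and out-neighbors of $v$ were deleted and hence lie outside $K'$, so no arc joins $v$ to $K'$ in either direction; and it is absorbent, since every vertex of $N^-(v)$ is absorbed by $v$, while every other vertex outside $K$ belongs to $H'$ and is absorbed by $K'$ (the absorbing arcs persist because $H'$ is induced). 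Both verifications are routine, so the three steps together establish that $D$ is kernel-perfect.
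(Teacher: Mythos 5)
Your proposal is correct, but there is no internal proof to compare it against: the paper states this result as a quoted theorem of Duchet \cite{DUCHET} and uses it as a black box (together with Lemma~\ref{lema:kkernelssekernel}) in the proof of Theorem~\ref{thm:meuteorema2cc}; no proof of it appears in the text. Your argument is the standard route in the literature, namely Neumann-Lara's semikernel lemma specialized to singleton semikernels. Each step checks out: the hypothesis is hereditary for induced subdigraphs; if no vertex $v$ satisfied $N^{+}(v) \subseteq N^{-}(v)$, then every vertex would have an asymmetric out-arc, and following a chosen asymmetric out-arc from each vertex must eventually close a directed cycle all of whose arcs are asymmetric (note a digon cannot arise this way, since an asymmetric arc $(u,w)$ forbids $(w,u)$), contradicting the hypothesis; and the greedy induction that deletes $\{v\} \cup N^{-}(v)$, takes a kernel $K'$ of the remainder, and returns $K' \cup \{v\}$ verifies independence and absorbency exactly as you describe, with induced-ness guaranteeing both that $K'$ stays independent in $H$ and that the absorbing arcs persist. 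The only cosmetic slip is in your hereditariness sentence: induced-ness is not needed for ``every cycle of $H$ is a cycle of $D$'' (that holds for arbitrary subdigraphs); it is needed to ensure that the \emph{reverse} of the symmetric arc survives in $H$, which is how you in fact use it. So your proof is a valid, self-contained, elementary replacement for the citation.
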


The strategy of the proof to our theorem is to show that the $2$-closure of a digraph whose cycles satisfies conditions (i) and (ii) has a symmetric arc in every cycle. From Theorem \ref{thm:todocircuitoarcosimetrico}, the $2$-closure of such digraph has a kernel. Therefore, by Lemma \ref{lema:kkernelssekernel}, the digraph has a $3$-kernel.

Before we present the demonstration, we must introduce the main lemma used in the proof. 

\begin{lema}
\label{lema:doresultadobom}
Let $D$ be a strongly connected digraph where every cycle $C$ of $D$ satisfies:

\begin{itemize}
    \item if $|C| \equiv 0 \Mod 3$, then $C$ has a short chord,
    \item if $|C| \not \equiv 0 \Mod 3$, then $C$ has two consecutive short chords.
\end{itemize} Then, for every $(u,v) \in A(D)$, there is a $(v,u)$-path of length at most two in $D$.
\end{lema}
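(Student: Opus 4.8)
The plan is to argue by contradiction using a minimal counterexample. Suppose the conclusion fails, and among all arcs $(a,b) \in A(D)$ with $d(b,a) \geq 3$ choose one, say $(u,v)$, for which the return distance $m := d(v,u)$ is smallest. Since $D$ is strongly connected this distance is finite, and by assumption $m \geq 3$. I would then fix a shortest $(v,u)$-path $P = (v = y_0, y_1, \dots, y_m = u)$ and let $C$ be the cycle obtained by appending the arc $(u,v)$ to $P$; because $P$ is simple and its internal vertices avoid $u$ and $v$, the closure $C$ is a simple directed cycle of length $m+1 \geq 4$, so the chord hypotheses apply to it.

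First I would set up a dictionary between short chords of $C$ and shortcuts of $P$. Writing $C = (c_0 = u,\, c_1 = v,\, c_2 = y_1,\, \dots,\, c_m = y_{m-1},\, c_0)$, a short chord is an arc $(c_i, c_{i+2})$ of $D$ (indices modulo $m+1$) that skips the vertex $c_{i+1}$. If the skipped vertex is an internal vertex of $P$ — equivalently, if $i \notin \{0, m\}$ — then replacing the two arcs $c_i \to c_{i+1} \to c_{i+2}$ by the single arc $c_i \to c_{i+2}$ deletes exactly one vertex and yields a $(v,u)$-\emph{path} of length $m - 1$, contradicting the minimality of $|P| = m$. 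Thus only the two \emph{problematic} chords $(c_0, c_2) = (u, y_1)$ (which skips $v = c_1$) and $(c_m, c_1) = (y_{m-1}, v)$ (which skips $u = c_{m+1} = c_0$) fail to shorten $P$ directly, and the whole argument reduces to controlling these two.

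I would then split on the residue of $|C| = m+1$ modulo $3$. If $m + 1 \not\equiv 0 \Mod{3}$, the hypothesis furnishes two consecutive short chords $(c_i, c_{i+2})$ and $(c_{i+2}, c_{i+4})$, sitting at indices $i$ and $i+2$. The two problematic indices $0$ and $m$ are adjacent on $C$, whereas the indices of two consecutive short chords are two apart; these cannot coincide once $m+1 \geq 4$, so at least one of the two chords is internal and the dictionary above gives the desired contradiction. If instead $m + 1 \equiv 0 \Mod{3}$ — forcing $m \equiv 2 \Mod{3}$ and hence $m \geq 5$ — only a single short chord is guaranteed; if it is internal we are done as before, and the remaining, and main, case is that the only available short chord is one of the two problematic ones.

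The crux is therefore to dispose of a problematic chord, which is where the minimality of the \emph{counterexample} (rather than of $P$) does the work. Suppose the short chord is $(u, y_1)$. I would show $d(y_1, u) = m - 1$: it is at most $m-1$ because the suffix $y_1, \dots, y_m = u$ of $P$ is a $(y_1,u)$-path, and it cannot be smaller, for otherwise prefixing the first arc $(v, y_1)$ of $P$ would produce a $(v,u)$-path shorter than $P$. Since $m \geq 5$, we get $d(y_1, u) = m - 1 \geq 3$, so $(u, y_1)$ is an arc whose return distance is at least $3$ yet strictly less than $m$ — contradicting the choice of $(u,v)$. The symmetric chord $(y_{m-1}, v)$ is handled identically, with $d(v, y_{m-1}) = m - 1$, and this exhausts all cases. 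The only delicate point is the bookkeeping that pins down exactly which chords fail to shorten $P$, together with the verification that a problematic chord always spawns a strictly smaller counterexample rather than merely a shorter cycle.
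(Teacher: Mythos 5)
Your proof is correct, and its skeleton matches the paper's: both take a shortest $(v,u)$-path, close it with the arc $(u,v)$ into a cycle, observe that any short chord skipping an \emph{internal} vertex of the path would contradict its minimality, and thus reduce everything to the two ``problematic'' chords $(u,y_1)$ and $(y_{m-1},v)$; the case $|C|\not\equiv 0\Mod 3$ is then handled identically (two consecutive short chords cannot both be problematic once $|C|\geq 4$). Where you genuinely diverge is the case $|C|\equiv 0\Mod 3$. The paper stays local: it glues the problematic chord into the cycle to obtain a derived cycle of length $|C|-1\not\equiv 0\Mod 3$, applies hypothesis (ii) to \emph{that} cycle, and repeats the consecutiveness analysis to get a contradiction. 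You instead make a global extremal choice at the outset --- an arc minimizing the return distance among all arcs with return distance at least $3$ --- and show that a problematic chord is itself an arc with return distance exactly $m-1\geq 3$, i.e.\ a strictly smaller counterexample. Your descent argument uses hypothesis (ii) only once and makes the induction explicit, at the cost of the global minimal-counterexample setup; the paper's version needs no global choice but invokes the cycle hypothesis twice. One cosmetic point in your write-up: prefixing the arc $(v,y_1)$ to a shortest $(y_1,u)$-path yields a \emph{walk}, not necessarily a path, but the intended inequality $d(v,u)\leq 1+d(y_1,u)$ is what you actually need and holds regardless, so nothing breaks.
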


\begin{proof}
Let $f =(u,v) \in A(D)$. Since $D$ is strongly connected, there is a minimal $(v,u)$-path $T = (t_0 = v, \dots, t_s = u)$ in $D$. Note that $C = T \cup (u,v)$ is a cycle. Clearly, if $|C| = 3$, then $|T| = 2$ and the result follows. Assume, for the sake of contradiction, that $|T| > 2$. We will show that it is not possible. Assume that $|C| \not \equiv 0 \Mod 3$. Because $T$ is minimal, the only possible short chords in $C$ are $(t_{s-1}, v)$ and $(u,t_1)$. These chords are consecutive in $C$ only if $|C| = 3$, which contradicts the length of $T$ being greater than two. Suppose then that $|C| \equiv 0 \Mod 3$. Due to the hypothesis, there is a short chord in $C$: $(t_{s-1}, v)$ or $(u,t_1)$. Let $(a,b)$ be one of such chords. Note that $(a,b) \cup (b,T,a)$ is a cycle of length $\not \equiv 0 \Mod 3$ and, therefore, must have two consecutive short chords. Analogously to the former case, the chords in this cycle are consecutive only if the cycle has length dividable by three, a contradiction. Hence, $|T| \leq 2$. $\blacksquare$
\end{proof}

\begin{thm}
\label{thm:meuteorema2cc}
Let $D$ be a strongly connected digraph. Assume that every cycle $C$ of $D$ satisfies:

\begin{itemize}
    \item if $|C| \equiv 0 \Mod 3$, then $C$ has a short chord,
    \item if $|C| \not \equiv 0 \Mod 3$, then $C$ has three short chords: two consecutive and one crossing one of the former.
\end{itemize} Then $D$ has a $3$-kernel.
\end{thm}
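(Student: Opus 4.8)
The plan is to follow the strategy announced just before the statement. Since $D$ is strongly connected all distances are finite, so $C^2(D)$ is well defined, and by Lemma \ref{lema:kkernelssekernel} (with $k=3$) a $3$-kernel of $D$ is exactly a kernel of $C^2(D)$. By Theorem \ref{thm:todocircuitoarcosimetrico}, $C^2(D)$ has a kernel as soon as every one of its cycles has a symmetric arc. Hence the entire proof reduces to the following claim: \emph{every cycle of $C^2(D)$ contains a symmetric arc}. I would prove this claim and let the three cited results do the rest.

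I would establish the claim by contradiction. Suppose $\Gamma=(x_0,\dots,x_{m-1},x_0)$ is a cycle of $C^2(D)$ with no symmetric arc, chosen with $m$ minimum. Note first that the hypotheses of the theorem are stronger than those of Lemma \ref{lema:doresultadobom} (three short chords, two of them consecutive, certainly provide two consecutive short chords), so Lemma \ref{lema:doresultadobom} applies to $D$. Now consider any arc $(x_i,x_{i+1})$ of $\Gamma$; by definition $d_D(x_i,x_{i+1})\le 2$. If this distance were $1$, i.e. $(x_i,x_{i+1})\in A(D)$, then Lemma \ref{lema:doresultadobom} would give a $(x_{i+1},x_i)$-path of length at most two, so $(x_{i+1},x_i)\in A(C^2(D))$ and the arc $(x_i,x_{i+1})$ would be symmetric, a contradiction. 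Therefore every arc of $\Gamma$ is a \emph{genuine distance-two} arc: $(x_i,x_{i+1})\notin A(D)$ but $d_D(x_i,x_{i+1})=2$, realised by some $2$-path $x_i\to y_i\to x_{i+1}$ in $D$.

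The heart of the argument is then to lift $\Gamma$ to $D$ and exploit the crossing-chord hypothesis. Form the closed walk $Z=(x_0,y_0,x_1,y_1,\dots,x_{m-1},y_{m-1},x_0)$ of length $2m$; assuming for now that $Z$ is a cycle, its candidate short chords are only of two kinds, namely $(x_i,x_{i+1})$ and $(y_i,y_{i+1})$. The chords $(x_i,x_{i+1})$ are absent because they do not lie in $A(D)$, so every present short chord of $Z$ has the form $(y_i,y_{i+1})$ and therefore jumps over a single vertex $x_{i+1}$ between the odd positions $2i+1$ and $2i+3$. A parity check shows that two such chords can never cross: crossing would force an index strictly between $2i+1$ and $2i+3$, i.e. equal to $2i+2$, which is impossible. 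Thus $Z$ has no crossing pair of short chords. Since the hypothesis demands such a crossing pair whenever $|Z|\not\equiv 0\Mod 3$, we are forced into $2m\equiv 0\Mod 3$, that is $m\equiv 0\Mod 3$, and moreover $Z$ must carry at least one short chord $(y_i,y_{i+1})\in A(D)$.

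The remaining case $m\equiv 0\Mod 3$ is where I expect the real work to lie. Here the guaranteed chord $(y_i,y_{i+1})$ should be used to shorten $\Gamma$: it gives $d_D(y_i,x_{i+2})\le 2$, and together with the arc $x_i\to y_i$ one aims to produce an arc $(x_i,x_{i+2})$ of $C^2(D)$ that bypasses $x_{i+1}$, yielding a cycle $\Gamma^{*}$ on fewer vertices; if $\Gamma^{*}$ still has no symmetric arc this contradicts the minimality of $m$, and otherwise a symmetric arc is extracted directly. The main obstacle will be making this reduction airtight, both in closing the $m\equiv 0\Mod 3$ case and in handling the degeneracies where $Z$ is not a simple cycle (the coincidences $y_i=y_j$ or $y_i=x_j$), which must be dealt with by passing to a shorter sub-cycle of the closed walk or by re-choosing the $2$-path realisations. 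It is worth emphasising that the \emph{crossing} short chord is precisely the strength beyond Lemma \ref{lema:doresultadobom} that drives the contradiction: the entire no-crossing observation on the lift $Z$ fails without it.
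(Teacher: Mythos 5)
Your setup is sound and matches the paper exactly: the reduction via Lemma \ref{lema:kkernelssekernel} and Theorem \ref{thm:todocircuitoarcosimetrico}, the minimal cycle $\Gamma$ in $C^2(D)$ without symmetric arcs, the observation (via Lemma \ref{lema:doresultadobom}) that every arc of $\Gamma$ has $d_D=2$, and the lift to the cycle $Z$ of length $2m$ in $D$. Your handling of the case $2m\not\equiv 0\Mod 3$ is also correct, and is in fact a cleaner packaging of the paper's parity argument: since every short chord of $Z$ must be of the form $(y_i,y_{i+1})$ (the even-position ones $(x_i,x_{i+1})$ are not arcs of $D$), and two short chords cross only when their starting positions differ by one, no crossing pair can exist, so hypothesis (ii) forces $2m\equiv 0\Mod 3$. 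The genuine gap is precisely the case you defer, $2m\equiv 0\Mod 3$, and the reduction you sketch for it fails. From the guaranteed chord $(y_i,y_{i+1})\in A(D)$ you only get $d_D(x_i,x_{i+2})\le d_D(x_i,y_i)+d_D(y_i,y_{i+1})+d_D(y_{i+1},x_{i+2})\le 3$, not $\le 2$, so the bypass arc $(x_i,x_{i+2})$ need not exist in $C^2(D)$ and no shorter cycle $\Gamma^{*}$ is produced. The variant that substitutes $y_i$ for $x_{i+1}$ fares no better: the resulting cycle $(\dots,x_i,y_i,x_{i+2},\dots)$ has the same length $m$, so minimality yields nothing, and it always contains the arc $(x_i,y_i)\in A(D)$, which is symmetric in $C^2(D)$ by Lemma \ref{lema:doresultadobom}; hence it can never serve as a smaller counterexample.

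The paper's missing ingredient works inside $D$, not inside $C^2(D)$: it adjoins the chord to $Z$, replacing the subpath $(y_i,x_{i+1},y_{i+1})$ by the arc $(y_i,y_{i+1})$, obtaining a cycle $B$ in $D$ of length $2m-1\not\equiv 0\Mod 3$, to which hypothesis (ii) applies again. The two short chords of $B$ incident with the adjoined arc, namely $(x_i,y_{i+1})$ and $(y_i,x_{i+2})$, are excluded by the minimality of $\Gamma$: either one gives $d_D(x_i,x_{i+2})\le 2$, hence an arc of $C^2(D)$ bypassing $x_{i+1}$ and a shorter cycle without symmetric arcs. Consequently all three required short chords of $B$ lie on the remaining path $(y_{i+1},x_{i+2},\dots,x_i,y_i)$, whose positions alternate in parity; since two of the three must cross, two of them start at consecutive positions, so one of them joins two $x$-vertices, i.e.\ is an arc of $D$ between consecutive vertices of $\Gamma$ --- impossible, as such an arc would be symmetric in $C^2(D)$. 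Note that the crossing hypothesis is thus needed a second time, inside $B$, which your sketch never exploits. Finally, you correctly flag but do not resolve the degeneracies where $Z$ is not a cycle; the paper disposes of the coincidence $y_i=y_j$ by the same minimality device ($d_D(x_i,x_{j+1})\le 2$ yields a shorter cycle in $C^2(D)$), so this part of your proposal also remains to be filled in.
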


\begin{proof}
Let $D'= C^2(D)$. It follows from Lemma \ref{lema:kkernelssekernel} that $D$ has a $3$-kernel if $D'$ has a kernel. We will show that every cycle in $D'$ has a symmetric arc and, by Theorem \ref{thm:todocircuitoarcosimetrico} $D'$ has a kernel.

Assume that there exists a cycle $C = (c_0, c_1, \dots, c_{n-1}, c_0)$ in $D'$ with no symmetric arc. Choose such cycle $C$ with the shortest length in $D'$. Note that no arc in $C$ exists in $D$. In virtue of Lemma \ref{lema:doresultadobom}, if $(c_i,c_{i+1})\in A(D)$, then $(c_{i+1},c_i) \in A(D')$. Therefore, $d_D(c_i,c_{i+1}) = 2$, for every $i < n$. 

Let $C'$ be the closed trail resulting from the substitution of every $(c_i,c_{i+1})$ in $C$ for $(c_i,c_{i,(i+1)}, c_{i+1})$. Figure \ref{fig:caminhoaumentado} illustrates an example of $C$ and $C'$. We will show that $C'$ is a cycle. Assume that $c_{i,(i+1)} = c_{j,(j+1)}$, $i \neq j$. Then, $d_D(c_i,c_{j+1}) = 2$ and $(c_i,c_{j+1}) \in A(D')$. Let $C''= (c_{j+1}, C, c_i) \cup (c_i,c_{j+1})$. Note that $C''$ is a cycle in $D'$ and $|C''| < |C|$, which contradicts the minimality of $C$. Hence, $c_{i,(i+1)} \neq c_{j,(j+1)}$ for every $i \neq j$. Since $C$ is a cycle, $c_i \neq c_j$ for every $i \neq j$. It follows that $C'$ is a cycle in $D$. Furthermore, note that $C'$ is an even cycle.

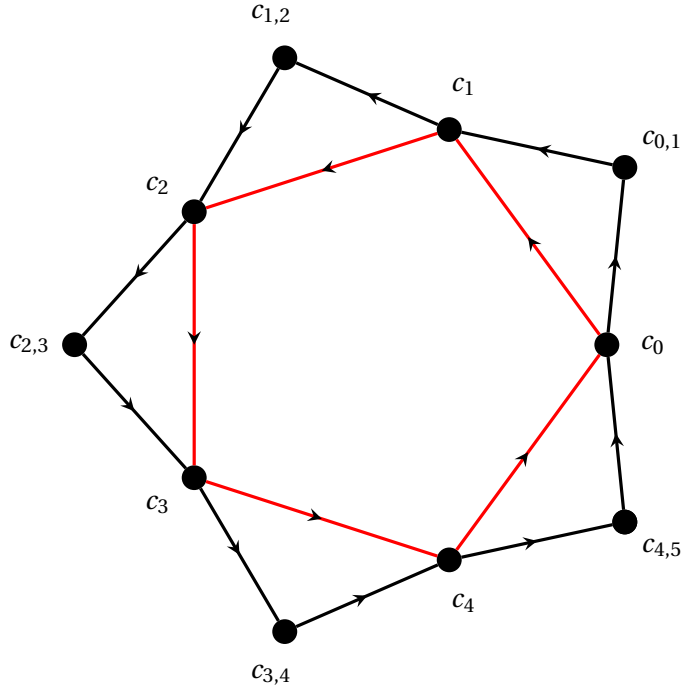
\begin{figure}[h]
    \centering
    \begin{tikzpicture}[line width = 1.2]
            \def \n {5}
                      \def \radius {3cm}
                      \def \margin {0} % margin in angles, depends on the radius
                        
                        \foreach \s in {0,...,4}
                        {
                          \node[vertex] (\s) at ({360/\n * (\s)}:\radius) {};
                          
                          \node at ({360/\n * (\s)}:\radius+0.6cm) {$c_{\s}$};
                          
                        }
                        
                        \foreach \s in {5,...,10}
                        {
                          \node[vertex] (\s) at ({360/\n * (\s - 1/2)}:\radius + 1cm) {};
                          
                        }
                        
                        \node at ({360/\n * (6 - 1/2)}:\radius + 1.6cm) {$c_{0,1}$};
                        \node at ({360/\n * (7 - 1/2)}:\radius + 1.6cm) {$c_{1,2}$};
                        \node at ({360/\n * (8 - 1/2)}:\radius + 1.6cm) {$c_{2,3}$};
                        \node at ({360/\n * (9 - 1/2)}:\radius + 1.6cm) {$c_{3,4}$};
                        \node at ({360/\n * (5 - 1/2)}:\radius + 1.6cm) {$c_{4,5}$};

                        \draw[directed,red] (0) to (1);
                        \draw[directed,red] (1) to (2);
                        \draw[directed,red] (2) to (3);
                        \draw[directed,red] (3) to (4);
                        \draw[directed,red] (4) to (0);
                        
                        \draw[directed] (0) to (6);
                        \draw[directed] (6) to (1);
                        \draw[directed] (1) to (7);
                        \draw[directed] (7) to (2);
                        \draw[directed] (2) to (8);
                        \draw[directed] (8) to (3);
                        \draw[directed] (3) to (9);
                        \draw[directed] (9) to (4);
                        \draw[directed] (4) to (5);
                        \draw[directed] (5) to (0);

      \end{tikzpicture}
    \caption{An illustration of a cycle $C$, in red, and $C$, in black.}
    \label{fig:caminhoaumentado}
\end{figure}

If $|C'| \not \equiv 0 \Mod 3$, then $C$ has two consecutive short chords and a crossed short chord. Note that $C'$ can be denoted as $(c'_0, c'_1, \dots, c'_{2n-1}, c'_0)$, where $c'_{2i} \in C$ for every $i < n$. Therefore, a short chord in $C'$ connects two vertices with the same parity. Clearly, one of the hypothesis' chords must connect two vertices of $C$. Since this chord exists in $D$, Lemma \ref{lema:doresultadobom} guarantees that the chord is symmetric in $D'$. Therefore, $C$ has a symmetric arc in $D'$. Assume that $|C'| \equiv 0 \Mod 3$. By the hypothesis, $C'$ has a short chord. If the short chord is $(c'_{2k},c'_{2k+2})$, for any $k < n$, then if follows from Lemma \ref{lema:doresultadobom} that $C$ has a symmetric chord. Assume this chord is
$(c'_{2k-1}, c'_{2k+1})$, for some $k < n$ (notation modulo $2n$). Then $B = (c'_{2k+1},C', c'_{2k-1}) \cup (c'_{2k-1}, c'_{2k+1})$ is a cycle of length $\not \equiv 0 \Mod 3$ and has two consecutive short chords and a crossed short chord. Note that if $(c'_{2k-2}, c'_{2k+1})$ or $(c'_{2k-1}, c'_{2k+2})$ are chords in $B$, then there is a path of length two connecting two vertices not adjacent in $C$. This implies in the existence of a cycle without symmetric arcs and with length less than $|C|$, contradicting its minimality. Therefore, $(c'_{2k-2}, c'_{2k+1})$ or $(c'_{2k-1}, c'_{2k+2})$ can not exist in $D$, as Figure \ref{fig:duascordasimpossiveis} shows. Hence, the three short chords must be chord in the path $(c'_{2k+1}, \dots, c'_{2k-1})$. Since it is a path whose indexes alternate between even and odd numbers, one of the three short chords connects two vertices of $C$. From Lemma \ref{lema:doresultadobom}, such arc is symmetric in $D'$ and it follows that $C$ has a symmetric arc.

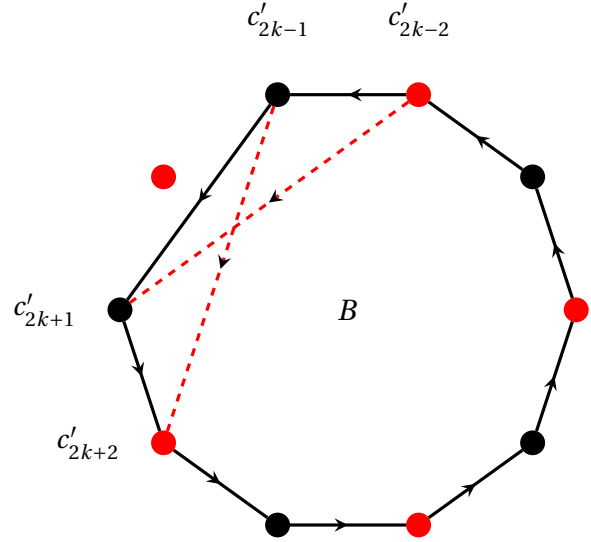
\begin{figure}[h]
    \centering
    \begin{tikzpicture}[line width = 1.2]
            \def \n {10}
                      \def \radius {3cm}
                      \def \margin {0} % margin in angles, depends on the radius
                        
                        \foreach \s in {0,2,4,6,8}
                        {
                          \node[vertex,red] (\s) at ({360/\n * (\s)}:\radius) {};
                          
                          %\node at ({360/\n * (\s)}:\radius+0.6cm) {$c_{\s}$};
                          
                        }
                        
                        \foreach \s in {1,3,5,7,9}
                        {
                          \node[vertex] (\s) at ({360/\n * (\s)}:\radius) {};
                          
                        }
                        
                        %\node at ({360/\n * (6 - 1/2)}:\radius + 1.6cm) {$c_{0,1}$};
                        %\node at ({360/\n * (7 - 1/2)}:\radius + 1.6cm) {$c_{1,2}$};
                        %\node at ({360/\n * (8 - 1/2)}:\radius + 1.6cm) {$c_{2,3}$};
                        %\node at ({360/\n * (9 - 1/2)}:\radius + 1.6cm) {$c_{3,4}$};
                        %\node at ({360/\n * (5 - 1/2)}:\radius + 1.6cm) {$c_{4,5}$};

                        \draw[directed] (0) to (1);
                        \draw[directed] (1) to (2);
                        \draw[directed] (2) to (3);
                        %\draw[directed,dashed] (3) to (4);
                        %\draw[directed,dashed] (4) to (5);
                        \draw[directed] (5) to (6);
                        \draw[directed] (6) to (7);
                        \draw[directed] (7) to (8);
                        \draw[directed] (8) to (9);
                        \draw[directed] (9) to (0);
                        
                        \draw[directed] (3) to (5);
                        \node[above of = 3] {$c'_{2k-1}$};
                        \node[left of = 5] {$c'_{2k+1}$};
                        \node[above of = 2] {$c'_{2k-2}$};
                        \node[left of = 6] {$c'_{2k+2}$};
                        
                        \draw[directed, red,dashed] (2) to (5);
                        \draw[directed, red,dashed] (3) to (6);
                        
                        \node {\large $B$};

      \end{tikzpicture}
    \caption{An example of the cycle $B$. The vertices painted red are part of $C$. If one of the dashed arcs in red exists, then $d_D(c'_{2k-2},c'_{2k+2}) = 2$.}
    \label{fig:duascordasimpossiveis}
\end{figure}

Since every cycle of $D'$ has at least one symmetric arc, it follows from Theorem \ref{thm:todocircuitoarcosimetrico} that $D$ has a kernel. Hence, from Lemma \ref{lema:kkernelssekernel}, $D$ has a $3$-kernel. $\blacksquare$

\end{proof}

\newpage
\section{The substitution method for $3$-kernels}
\label{chapter3}

In 1983, Duchet and Meyniel \cite{DUCHETDUASCORDAS} presented Theorem \ref{thm:duascordascurtascruzadas} regarding kernel-perfect digraphs.
\begin{thm}[Duchet and Meyniel, 1983]
\label{thm:duascordascurtascruzadas}
If $D$ is a digraph and every odd circuit has two crossed short chords, then $D$ is kernel-perfect.
\end{thm}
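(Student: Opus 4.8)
The plan is to run a minimal-counterexample argument, extract a single odd circuit, and use its two crossed short chords to rebuild a kernel. First I would check that the hypothesis is hereditary: any odd circuit of an induced subdigraph $H$ is an odd circuit of $D$, and the two crossed short chords it carries are arcs between vertices of $H$, hence arcs of the induced subdigraph $H$ itself. Consequently it is enough to prove that every such $D$ has a kernel, for the same reasoning then applies to every induced subdigraph and yields kernel-perfectness. So suppose, for contradiction, that $D$ is \emph{kernel-critical}: $D$ has no kernel, but every proper induced subdigraph of $D$ does.

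I would then record the standard structure of a kernel-critical digraph. Such a $D$ is strongly connected, so every vertex has positive in- and out-degree; and for every vertex $v$ and every kernel $K$ of $D-v$, the vertex $v$ sends no arc into $K$ (otherwise $K$ would absorb $v$ and be a kernel of $D$) yet receives at least one arc from $K$ (otherwise $K\cup\{v\}$ would be a kernel of $D$). Since a digraph with no odd directed circuit has a kernel (Richardson's theorem), $D$ must contain an odd circuit $C=(c_0,\dots,c_{m-1},c_0)$. By hypothesis $C$ carries two crossed short chords; because two short chords $(c_i,c_{i+2})$ and $(c_{j},c_{j+2})$ cross precisely when $j=i+1$, these chords are forced to be $(c_i,c_{i+2})$ and $(c_{i+1},c_{i+3})$ for some $i$ (indices modulo $m$).

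The heart of the proof is to convert these two chords into a contradiction. The short chord $(c_i,c_{i+2})$ bypasses $c_{i+1}$, replacing the length-two subpath $c_i\to c_{i+1}\to c_{i+2}$ by a single arc and so producing a strictly shorter circuit of even length; the crossing chord $(c_{i+1},c_{i+3})$ furnishes an independent second bypass. Feeding these bypasses into the kernel-critical structure above --- taking $K$ to be a kernel of $D-c_{i+1}$, using the forced in-arc from $K$ to $c_{i+1}$, and re-routing absorption through the chords --- I would exhibit an independent, absorbing set of $D$, contradicting that $D$ has no kernel. I expect this final step to be the main obstacle: it is exactly the passage from the purely \emph{local} data ``two crossed short chords'' to a \emph{global} kernel, and it is where the \emph{crossing} hypothesis (rather than merely the existence of two short chords) is indispensable, since crossing is what neutralises the parity obstruction that defeats the chordless directed odd circuit. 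If the direct construction along $C$ proves unwieldy, a cleaner alternative is the semikernel method of Neumann-Lara, which reduces kernel-perfectness to the existence of a nonempty semikernel in every induced subdigraph; there one notes that a kernel of a terminal strongly connected component is automatically a semikernel of the whole digraph, collapsing the problem to the strongly connected case, in which the crossed short chords are again used to build the required self-absorbing independent set.
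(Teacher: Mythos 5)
Your setup is sound, but the proof is missing at exactly the point where the theorem lives. The reductions you make are all correct: the chord hypothesis is hereditary under induced subdigraphs, so kernel-perfectness reduces to existence of a kernel; a kernel-critical $D$ has the structure you record (for any $v$ and any kernel $K$ of $D-v$, no arc from $v$ into $K$ but some arc from $K$ to $v$); Richardson's theorem produces an odd circuit $C$; and, with the paper's definition of crossing, two short chords cross precisely when they have the form $(c_i,c_{i+2})$, $(c_{i+1},c_{i+3})$. But the step you yourself flag as ``the main obstacle'' --- taking a kernel $K$ of $D-c_{i+1}$ and ``re-routing absorption through the chords'' to exhibit a kernel of $D$ --- is the entire content of the theorem, and it cannot be carried out locally around the single circuit $C$. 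Inserting $c_{i+1}$ into $K$ forces the removal of the vertices of $K$ adjacent to it; removing those vertices destroys the absorption of vertices that may be far from $C$, whose repair forces further insertions and removals, and so on: the modification cascades through the whole digraph. The chord hypothesis then has to be invoked not on the one odd circuit handed to you by Richardson, but on the odd closed walks generated by this global propagation, over which your setup gives no control. The same gap recurs verbatim in your semikernel alternative: reducing to the strongly connected case via terminal components is fine, but the strongly connected case is again the whole problem, and you leave it with the same one-sentence placeholder.

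For comparison: the paper does not reprove Theorem~\ref{thm:duascordascurtascruzadas} at all --- it cites Duchet and Meyniel and describes their \emph{substitution method}, which is precisely the global repair process your sketch omits. One removes $x_0$, takes a kernel $K$ of $D-x_0$, and builds an iterative sequence of sets $N_0, N_1, N_2, \dots$, alternately deleted from and added to $K$, whose outcome is a pre-kernel containing $x_0$; the crossed-short-chord hypothesis is what shows the pre-kernel is independent and absorbing, applied to circuits arising from paths that the construction itself produces. Section~\ref{chapter3} of the paper carries out an argument of exactly this shape in the $3$-kernel setting, and the machinery it needs --- the $3$-substitution sequence, intermediate sets, Lemmas~\ref{lema:kkernelabsorventequaseindependente}, \ref{lema:kkernelcaminhopropriedades}, \ref{lema:umaunicacordadessetipo} and \ref{lema:kkernelcaminhoinversoaditivo} on roads and their chords, and the circuit analysis in Theorem~\ref{thm:meuteorema4cc} --- is a fair measure of what your ``re-routing'' step would actually have to contain. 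To turn your proposal into a proof, that step must be replaced by a construction of this kind, not by an appeal to the local data of one odd circuit.
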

To prove Theorem \ref{thm:duascordascurtascruzadas}, Duchet and Meyniel introduced the \textbf{substitution method}. The substitution method is an iterative process of: removing an arbitrary vertex $x_0$; obtaining a kernel $K$ of $D-x_0$; and, in $D$, performing a sequence of substitutions of vertices in $K$ for other vertices of $D$, in order to create a kernel $K'$ of $D$ where $x_0 \in K'$. Although the method does not guarantee its result to be a kernel, the authors proved that if a digraph $D$ satisfies the hypothesis of Theorem \ref{thm:duascordascurtascruzadas}, then the result is a kernel of $D$.

In this section, we present a generalization of the substitution method for $3$-kernels and use it to prove a new theorem regarding the existence of $3$-kernels in digraphs.

\subsection{The $3$-substitution method}

The general idea of the $3$-substitution method is analogous to the original method: vertices of $D$ are divided into subsets of vertices which are then used to alter the kernel $K$ of $D-x_0$. In Duchet and Meyniel's method, vertices are divided into sets $N_i$. If the index $i$ of a set is odd, those vertices are removed from $K$; and, if the index of a set is even, those vertices are added to $K$. The resulting set $K' = (K \setminus N_{odd}) \cup N_{even}$ is called a \textbf{pre-kernel}. In our method, vertices are divided into sets $N_i$ whose functions depend on the congruence of $i \bmod{3}$. Sets with indexes congruent to zero will be added to the $3$-kernel, and sets with indexes not congruent to zero will be removed from the $3$-kernel.

\subsubsection{Definitions}
\label{definicoessubsub}

Before we present the method, we need to introduce some new definitions that will be used in this section.

\begin{defn}
Let $D$ be a digraph and let $\ell \in \mathbb{N}$. The \textbf{$\ell$-in-neighbourhood} of a vertex $v \in V(D)$ is the set $N^{\ell-}(v) = \{ u \in V(D) : d(u,v) = \ell \}$. The $\ell$-in-neighbourhood of a set of vertices $S \subseteq V(D)$ is $N^{\ell-}(S) = \{ u \in V(D) \setminus S : d(u,v) = \ell \textrm{, for some } v\in S\}$. Note that $N^{1-}(S) = N^-(S)$.
\end{defn}

\begin{defn}
let $D$ be a digraph, let $S \subseteq V(D)$ and let $\ell \in \mathbb{N}$. The \textbf{cone} of distance $\ell$ of $S$ is the set $\Delta^{\ell +}(S)= \{ v \in V(D) : \textrm{for some $u \in S$, } 0 < d(u,v) \leq \ell \}$.
\end{defn}

\subsubsection{The $3$-substitution sequence and pre-$3$-kernel}

Using the definitions from Section \ref{definicoessubsub} we define a \textbf{$3$-substitution sequence} and a \textbf{pre-$3$-kernel}, some of the key concepts of the $3$-substitution method. 

\begin{defn}
let $D$ be a digraph. A \textbf{$3$-substitution sequence} starting at a vertex $x_0 \in V(D)$ and a $3$-kernel $K$ of $D-x_0$ is a sequence of sets $(N_0, N_1, \dots, N_{3p})$ built from the following process:

\begin{fleqn}
    \begin{equation}
        N_0 = M_0 = \{ x_0\}
    \end{equation}
    
    \begin{equation}
        N_{3k+1} = (N^-(N_{3k}) \cap K) \setminus \cup_{k' < k} \left ( N_{3k'+1} \cup N_{3k'+2}\right )
    \end{equation}
    
   \begin{equation}
       N_{3k+2} = (N^{2-}(N_{3k}) \cap K) \setminus \cup_{k' < k} \left ( N_{3k'+1} \cup N_{3k'+2} \right )
   \end{equation}
   
   \begin{multline}
       M_{3k+3} =  \{  x \in V \left( D \setminus \cup_{k'\leq k} M_{3k'} \right) : \Delta^2(x) \cap K \subseteq (\cup_{k' \leq k} N_{3k'+1} \cup N_{3k'+2}) \textrm{ and } \\ \Delta^2(x) \cap \cup_{k' \leq k} N_{3k'} = \emptyset  \}
   \end{multline}
   
   \begin{equation}
       N_{3k+3} \textrm{ is a $3$-kernel of } D[M_{3k+3}]
   \end{equation}
   
   \begin{equation}
       \textrm{$p$ is the smallest integer $k$ such that $N_{3k+1} = \emptyset$ and $N_{3k+2} = \emptyset$}
   \end{equation}
\end{fleqn}

\end{defn}

\begin{obs}
Note that the sets in a $3$-substitution sequence are disjoint: from the definition of each set, a vertex can not belong to two sets. This observation is fundamental to comprehend the following definitions and lemmas in the next section.
\end{obs}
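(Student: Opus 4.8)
The plan is to prove the pairwise disjointness by induction on the order in which the sets are created, keeping track of the residue of each index modulo three. The first thing I would record is the \emph{role} forced by each index: by construction $N_{3k+1}$ and $N_{3k+2}$ are formed as intersections with $K$, so they are subsets of $K$; the set $N_0 = \{x_0\}$ is the single removed vertex, which does not belong to $K$ (indeed $x_0 \notin V(D-x_0)$); and each $N_{3k+3}$ is a $3$-kernel of $D[M_{3k+3}]$, hence $N_{3k+3} \subseteq M_{3k+3}$. Consequently it suffices to establish three things: that the sets whose index is $\not\equiv 0 \Mod 3$ (all contained in $K$) are mutually disjoint, that the sets whose index is $\equiv 0 \Mod 3$ are mutually disjoint, and that these two families avoid one another, for which it is enough to show that the index-$\equiv 0 \Mod 3$ sets are disjoint from $K$.

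Two of these follow at once from the explicit set differences in the recursion. For removed sets with distinct bases, say $N_{3a+r}$ and $N_{3b+s}$ with $a<b$ and $r,s \in \{1,2\}$, the definition of the later set subtracts $\bigcup_{k'<b}(N_{3k'+1}\cup N_{3k'+2})$, and since $a<b$ this union already contains $N_{3a+r}$; hence the two sets are disjoint. For the added family, $N_0 = M_0$ and, for each $k$, the set $M_{3k+3}$ is chosen inside $V(D)\setminus \bigcup_{k'\le k}M_{3k'}$, so the family $\{M_{3k}\}$ is pairwise disjoint by construction, and the containments $N_{3k}\subseteq M_{3k}$ carry this disjointness over to the sets whose index is $\equiv 0 \Mod 3$.

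The two remaining cases carry the real content. For the removed sets sharing the same base, $N_{3k+1}$ and $N_{3k+2}$, no explicit difference separates them, so their disjointness reduces to $N^-(N_{3k})\cap N^{2-}(N_{3k}) = \emptyset$; I would argue this by reading $N^{2-}(N_{3k})$ as the vertices whose distance to the whole set $N_{3k}$ equals two, so that a vertex cannot simultaneously lie at distance one and at distance two from $N_{3k}$. For separating the added family from $K$, I would invoke the two defining conditions of $M_{3k+3}$, namely $\Delta^{2+}(x)\cap K \subseteq \bigcup(N_{3k'+1}\cup N_{3k'+2})$ and $\Delta^{2+}(x)\cap \bigcup N_{3k'} = \emptyset$, together with $3$-independence of $K$, to argue that a surviving kernel vertex cannot lie in $M_{3k+3}$.

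I expect the main obstacle to be precisely this same-base case. Under the literal ``for some $v \in N_{3k}$'' reading of $N^{\ell-}$, a vertex could be at directed distance one from one element of $N_{3k}$ and at distance two from another, since in a digraph the data $d(u,v_1)=1$ and $d(u,v_2)=2$ place no bound on $d(v_1,v_2)$, and the $3$-independence of $N_{3k}$ gives no leverage on distances measured \emph{from} $u$. Making the observation literally correct therefore hinges on fixing the convention that distance is taken to the set $N_{3k}$ as a whole; once that is adopted, the distance-one and distance-two classes are disjoint by definition and the induction closes cleanly.
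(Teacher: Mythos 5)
Your overall decomposition is sound and, in fact, considerably more careful than the paper itself, which offers no argument beyond the phrase ``from the definition of each set.'' Your first two cases (removed sets with distinct bases, and the added family $\{N_{3k}\}$ among themselves, via $N_{3k}\subseteq M_{3k}$ and the explicit exclusion $M_{3k+3}\subseteq V(D)\setminus\bigcup_{k'\le k}M_{3k'}$) are correct, and your diagnosis of the same-base pair $N_{3k+1},N_{3k+2}$ is a genuine catch: under the literal ``for some $v\in S$'' reading of $N^{\ell-}(S)$, a vertex can be at distance one from one element of $N_{3k}$ and at distance two from another (possible as soon as $k\ge 1$, when $N_{3k}$ may have several vertices), so $N^-(N_{3k})\cap N^{2-}(N_{3k})\ne\emptyset$ is not excluded; your distance-to-the-set convention repairs this.

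The gap is in your fourth case, separating the added family from $K$. Your sketch --- ``invoke the two defining conditions of $M_{3k+3}$ together with $3$-independence of $K$'' --- does not close under the paper's literal definition of the cone, which requires $0<d(u,v)$ and hence $x\notin\Delta^2(x)$. Take $x\in K$ with $d(x,x_0)>2$, lying in no removed set. Since no path of length at most two from $x$ can pass through $x_0$, $3$-independence of $K$ in $D-x_0$ gives $\Delta^2(x)\cap K=\emptyset$, so both conditions defining $M_3$ hold \emph{vacuously} and $x\in M_3$; note that $3$-independence works against you here, making the first condition vacuously true rather than contradictory. Such an $x$ may then be chosen into $N_3$, and nothing prevents $x$ from later lying in $N^-(N_{3k''})\cap K$ for some $k''>1$ (the conditions on $M_{3k''}$ constrain only the out-cones of its members, not their in-neighbours), whence $x\in N_3\cap N_{3k''+1}$ and disjointness fails. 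The repair is exactly parallel to the one you made for the same-base case: read $\Delta^{2+}(x)$ as containing $x$ itself, which is what the paper's prose presumes when it asserts that ``a vertex in $M_3$ does not belong to $K$ by definition.'' Then $x\in K\cap M_{3k+3}$ forces $x\in\Delta^2(x)\cap K\subseteq\bigcup_{k'\le k}(N_{3k'+1}\cup N_{3k'+2})$, so $d(x,v)\in\{1,2\}$ for some $v\in N_{3k'}$ with $k'\le k$, and that $v$ lies in $\Delta^2(x)\cap\bigcup_{k'\le k}N_{3k'}$, contradicting the second defining condition of $M_{3k+3}$. With both conventions fixed your induction closes; without them, the observation is false as literally stated, so the fourth case needed the same explicit treatment you gave the third.
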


\begin{defn}
Let $D$ be a digraph. A \textbf{pre-$3$-kernel} $N$ is a set of vertices obtained from a $3$-substitution sequence $(N_0,N_1,\dots,N_{3p})$ from a vertex $x_0 \in V(D)$ and a $3$-kernel $K$ of $D-x_0$. Formally, $$N = \left( K \setminus \bigcup_{k=0}^p N_{3k+1}\cup N_{3k+2}\right) \cup \bigcup_{k=0}^p N_{3k}.$$
\end{defn}

By definition, $N_1 \subseteq N^-(x_0) \cap K$ and $N_2 \subseteq N^{2-}(x_0) \cap K$. The idea is that, from demanding that $x_0$ belongs to the pre-$3$-kernel the method builds, vertices in $N_1$ and $N_2$ must be removed from $K$, in order to keep the pre-$3$-kernel $3$-independent. Now, consider the set $M_3 = \{ x \in V(D) \setminus (N_1 \cup N_2) : \Delta^{2+}(x) \cap K \subseteq N_1 \cup N_2 \}$. Note that a vertex in $M_3$ does not belong to $K$ by definition. Furthermore, vertices of $M_3$ are $2$-absorbed by $N_1 \cup N_2 \subseteq K$ in $D$. Since $M_3$ may not be $3$-independent, we add to the pre-$3$-kernel only the vertices of a $3$-kernel $N_3$ from the subdigraph $D[M_3]$. 

The method's next iterations follow the same idea: build subsets of vertices $$N_1, N_2, N_4, N_5, \dots, N_{3k+1}, N_{3k+2}$$ of $K$, whose vertices will be removed from $K$, and subsets $$ N_0, N_3, N_6, \dots, N_{3k}$$ of $V\setminus K$, whose vertices will be added to the pre-$3$-kernel. Intuitively, a pre-$3$-kernel is a set composed from the removal of ``unwanted'' vertices of $K$ and the addition of vertices that ``correct'' the $2$-absorption of $K$ in $D$. As we explain in the next section, a pre-$3$-kernel is not guaranteed to be a $3$-kernel. Nevertheless, we show that a pre-$3$-kernel is $2$-absorbent and, if it is not $3$-independent, there are certain paths between two vertices in the pre-$3$-kernel. Before we present the properties of a pre-$3$-kernel in the next section, we define \textbf{intermediate vertices}.

\begin{defn}
Let $D$ be a digraph and $S = (N_0, \dots ,N_{3p})$ a $3$-substitution sequence of $D$. A sequence of \textbf{intermediate sets} of $S$ is a sequence of sets $I = (N'_1, N'_2, N'_4, \dots, N'_{3p-2}, N'_{3p-1})$ defined as:
\begin{fleqn}
\begin{equation}
    N'_{3k+1} = N^-(N_{3k}) \setminus N_{3k+1}, \textrm{ e }
\end{equation}

\begin{equation}
    N'_{3k+2} = N^{2-}(N_{3k}) \setminus N_{3k+2},
\end{equation}
\end{fleqn} for every $k < p$.
If a vertex belongs to an intermediate set, then it is called an \textbf{intermediate vertex}.
\end{defn}

Let $D$ be a digraph, let $S$ be a $3$-substitution sequence of $D$ and let $S'$ be a sequence of intermediate sets of $S$. Intuitively, a vertex $x \in V(D)$ belongs to an intermediate set of $S'$ if $x$ is not in any set of $S$ but is ``between'' two vertices that belong to any set of $S$. For example, let $v \in N_1$ and $w \in N_3$. By the definition of $N_3$, $v$ belongs to the cone of distance $2$ of $w$. Therefore, there exists a $(w,v)$-path $T = (w,t_1, v)$ with length two. The vertex $t_1$ can not belong to any set in the form $N_{3k+1}$ or $N_{3k+2}$, for any $k$, but is ``used'' by $u$ to get to $v$. Hence, we say that $t_1$ is intermediate to $N_1$ and $N_3$. So, it is in $N'_2$. Figure \ref{fig:exemplointermediario} depicts an example.

\begin{figure}
    \centering
    \begin{tikzpicture}
                \node[vertex] (0) {};
                \node[vertex, left of = 0] (1) {};
                \node[vertex, left of = 1] (2) {};
                \node[vertex, left of = 2] (3) {};
                \node[vertex, left of = 3] (4) {};
                \node[vertex, left of = 4] (5) {};
                
                \node[above of = 0, yshift=-0.2cm] {$N_0$};
                \node[above of = 1, yshift=-0.2cm] {$N_{1}$};
                \node[above of = 2, yshift=-0.2cm] {$N'_2$};
                \node[above of = 3, yshift=-0.2cm] {$N_3$};
                \node[above of = 4, yshift=-0.2cm] {$N'_4$};
                \node[above of = 5, yshift=-0.2cm] {$N_5$};
                
                %\node[right of = -3] (rd) {$\dots$};
                \node[left of = 5] (ld) {$\dots$};
                
                \draw[directed] (ld) to (5);
                %\draw[directed] (-3) to (rd);

                \draw[directed] (5) to (4);
                \draw[directed] (4) to (3);
                \draw[directed] (3) to (2);
                \draw[directed] (2) to (1);
                \draw[directed] (1) to (0);

    \end{tikzpicture}
    \caption{The vertices in $N'_2$ and $N'_4$ are, respectively, at distance two and four from $x_0 \in N_0$. Since they do not belong to the $3$-kernel of $D-x_0$, they are called intermediate.}
    \label{fig:exemplointermediario}
\end{figure}
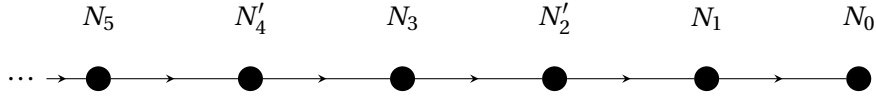

\subsection{Properties of a pre-$3$-kernel}

In this section, we present some useful lemmas about the properties of a pre-$3$-kernel. 

\begin{lema}
\label{lema:kkernelabsorventequaseindependente}
Let $D$ be a digraph, let $Z=(N_0,N_1,\dots, N_{3p})$ be a $3$-substitution sequence of $D$ starting at $x_0 \in V(D)$ and a $3$-kernel $K \subseteq D-x_0$, and let $N$ be the pre-$3$-kernel of $D$ obtained from $Z$. The pre-$3$-kernel $N$ is $2$-absorbent in $D$. Furthermore, if there exists a $(N,N)$-path $T = (a, \dots, b)$ in $D$ such that $|T| \leq 2$, then $$ a \in N_{3k'} \textrm{ and } b \in N_{3k}, \textrm{ for some pair $k',k$,  } 0 \leq k' \leq k \leq p.$$
\end{lema}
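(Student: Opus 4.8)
The plan is to prove the two assertions separately, in each relying on the defining recursions for the sets $N_{3k}$, $N_{3k+1}$, $N_{3k+2}$, $M_{3k}$ and on the two properties of the $3$-kernel $K$ of $D-x_0$: that it is $3$-independent and $2$-absorbent in $D-x_0$. I will repeatedly use that $\bigcup_{k} N_{3k}\subseteq N$ (the added sets all survive into $N$) and that a vertex of $V(D)\setminus N$ which lies in no removed set and differs from $x_0$ cannot belong to $K$ (otherwise it would be a surviving element of $K$, hence in $N$).

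For $2$-absorbency I would fix $u\in V(D)\setminus N$ and argue by cases. If $u\in N_{3k+1}$ then $u\in N^-(N_{3k})$, so some vertex of $N_{3k}\subseteq N$ absorbs $u$ at distance one; if $u\in N_{3k+2}$ the same happens at distance two via $N^{2-}(N_{3k})$. Otherwise $u\notin K$ and $u\neq x_0$. If $\Delta^{2+}(u)$ meets some added set $N_{3k}$, that vertex absorbs $u$; if $\Delta^{2+}(u)\cap K$ contains a vertex that survived into $N$, that vertex absorbs $u$. The remaining case, where every $K$-vertex of $\Delta^{2+}(u)$ has been removed and no added vertex lies in $\Delta^{2+}(u)$, is the crux of the lemma and the step I expect to be the main obstacle. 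Here I would let $j$ be the first stage by which the finite set $\Delta^{2+}(u)\cap K$ has been entirely absorbed into removed sets, observe that $j\le p-1$ by the stopping rule defining $p$, and verify that $u$ then meets both cone conditions defining $M_{3j+3}$; consequently $u$ lies in some $M$-set $M_{3m}$ and, not being in $N$, is $2$-absorbed by the $3$-kernel $N_{3m}\subseteq N$ of $D[M_{3m}]$. The delicate point is checking that the two cone conditions hold at one and the same index, and that this index is actually realised before the process terminates.

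For the second assertion, suppose $T=(a,\dots,b)$ satisfies $1\le|T|\le 2$ with $a,b\in N$, so that $0<d(a,b)\le 2$. I would first show that $b$ is added. If $b$ were not added it would survive in $K$; then $a\notin K$ (since $a,b\in K$ and $d(a,b)\le 2$ would violate $3$-independence), so $a$ is added, say $a\in N_{3k'}\subseteq M_{3k'}$, and the condition $\Delta^{2+}(a)\cap K\subseteq(\textrm{removed sets})$ in the definition of $M_{3k'}$ would force $b\in\Delta^{2+}(a)\cap K$ to be removed, a contradiction. Hence $b\in N_{3k}$. I would then show $a$ is added: were $a$ a surviving vertex of $K$, then $b\in N_{3k}$ and $d(a,b)\le 2$ give $a\in\bigl(N^-(N_{3k})\cup N^{2-}(N_{3k})\bigr)\cap K$, and the definitions of $N_{3k+1}$ and $N_{3k+2}$ then place $a$ in a removed set, again a contradiction; so $a\in N_{3k'}$. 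Finally $k'\le k$ follows because $a\in N_{3k'}\subseteq M_{3k'}$ forces $\Delta^{2+}(a)\cap N_{3k''}=\emptyset$ for every $k''\le k'-1$, whereas $b\in\Delta^{2+}(a)\cap N_{3k}$; the degenerate case $k'=0$, i.e. $a=x_0$, gives $k'\le k$ at once, and $k\le p$ is immediate from the length of the sequence. This second assertion is comparatively routine once the classification of vertices used for absorbency is available.
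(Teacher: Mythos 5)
Your plan follows the paper's own proof almost step for step: $2$-absorbency is obtained by showing that a vertex not $2$-absorbed by $N$ satisfies the defining conditions of some $M_{3m}$ and is then $2$-absorbed by the $3$-kernel $N_{3m}\subseteq N$ of $D[M_{3m}]$, and the structural claim is obtained by classifying each endpoint as a surviving vertex of $K$ or a vertex of some $N_{3k}$, using the definitions of $N_{3k+1}$ and $N_{3k+2}$ to eliminate the case $a\in K$, and the cone condition in the definition of $M_{3k}$ to eliminate the case $b\in K$ and to order the indices. Your treatment of the absorbency half, including the ``delicate point'' you flag (taking $j$ to be the stage by which all of $\Delta^{2+}(u)\cap K$ has been removed, noting $j\le p-1$ from the stopping rule, and allowing $u$ to fall into an earlier $M$-set), is correct and in fact more explicit than the paper's.

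There is, however, one genuine gap. To rule out $a,b\in K$ you assert that $d(a,b)\le 2$ ``would violate $3$-independence.'' But $K$ is a $3$-kernel of $D-x_0$, not of $D$: it is $3$-independent only with respect to distances in $D-x_0$. A path of length two from $a$ to $b$ whose internal vertex is $x_0$ contradicts nothing about $K$, and this is exactly the configuration the substitution method must worry about, since $x_0$ is the vertex being reinserted. The paper disposes of it separately: if the path is $(a,x_0,b)$, then $a\in N^-(x_0)\cap K=N_1$, so $a$ was removed and $a\notin N$, a contradiction. Your proof needs this extra argument; without it that case is simply open. A second, smaller point, which you share with the paper and which I flag only for completeness: in the case where $a$ is added and $b$ survives in $K$, the cone condition $\Delta^{2+}(a)\cap K\subseteq(\textrm{removed sets})$ that you invoke comes from the definition of $M_{3k'}$ and exists only for $k'\ge 1$; the set $M_0=\{x_0\}$ is declared outright, so the sub-case $a=x_0$ is not covered by this argument and needs separate treatment.
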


\begin{proof}
Assume, for the sake of contradiction, that there is $v \in V(D) \setminus N$ such that $\Delta^2(v) \cap N = \emptyset$. By the definition of $M_{3k+3}$, every vertex not $2$-absorbed by $N$ must belong to $M_{3k+3}$, therefore, $v \in M_{3k'}$ for some $k' \geq 0$. Since $v \not \in N_{3k'}$, there exists $u \in N_{3k'}$ such that $d(v,u) \leq 2$. But then $v$ is $2$-absorbed by $N_{3k'}$ and, consequently, by $N$, a contradiction. Hence, $N$ is $2$-absorbent.

We now prove that if there is a $(N,N)$-path $T = (a, \dots, b)$ in $D$ such that $|T| \leq 2$, then $ a \in N_{3k'} \textrm{ and } b \in N_{3k}, \textrm{ for some pair $k,k'$, } 0 \leq k' \leq k \leq p.$ Let $T$ be a $(N,N)$-path with $|T| \leq 2$. If $a,b \in N$, then $a$ and $b$ belong to $K$ or $N_{3k}$, for some $k \geq 0$. Clearly, $a$ and $b$ can not both belong to $K$, since $K$ is a $3$-kernel of $D-x_0$ and, if there is a path $(a,x_0,b)$, then $a \in N_1$ and does not belong to $N$. Assume that $a \in K$ and $b \in N_{3k'}$, for some $k' \geq 0$. By the definition of $N_{3k'+1}$, if $a \not \in N_{3k'+1} \cup N_{3k'+2}$, then $a \in N_{3\ell+1} \cup N_{3\ell+2}$, for some $\ell < k'$. Both cases contradict the hypothesis that $a \in N$. Assume that $a \in N_{3k'}$, for some $k' \geq 0$ and $b \in K$. Then $a \in M_{3k'}$, which contradicts the definition of $M_{3k+3}$ since $b \in \Delta^2(a)$ and $b \in K$. Assume, for the sake of contradiction, that $a \in N_{3k}$ and $b \in N_{3k'}$, for some $k' \leq k$. The case where $k' = k$ contradicts the definition of $N_{3k}$ because it is not a $3$-kernel of $D[M_{3k}]$. Finally, assume that $k' < k$. By the definition of $M_{3k}$, $a \not \in M_{3k}$ since it already is absorbed by $b \in N_{3k'}$. Clearly, if $a \not \in M_{3k}$ then $a \not \in N_{3k}$. Therefore, if there exists $(N,N)$-path $T = (a, \dots, b)$ in $D$ with $|T| \leq 2$, then $ a \in N_{3k'} \textrm{ and } b \in N_{3k}, \textrm{ where } 0 \leq k' \leq k \leq p$, as we wanted to prove. $\blacksquare$
\end{proof}

\begin{defn}
Let $D$ be a digraph and let $(N_0,N_1,\dots,N_{3p})$ be a $3$-substitution sequence of $D$ starting at $x_0 \in V(D)$ and a $3$-kernel $K \subseteq D-x_0$. We call a path $T=(t_s, t_{s-1}, \dots, t_1, t_0)$ in $D$ a \textbf{road} if it satisfies the following conditions:

\begin{fleqn}

\begin{equation}
    t_s \in N_s, \textrm{for some $s \leq 3p$};
\end{equation}

\begin{equation}
\label{cond1}
    t_{3i+1} \in N_{3i+1} \Leftrightarrow t_{3i+2} \in N'_{3i+2}, \textrm{ for every $3i \leq s-2$ such that $t_{3i+2} \in V(T)$};
\end{equation}

\begin{equation}
\label{cond2}
    t_{3i} \in N_{3i}, \textrm{ for every $3i \leq s$};
\end{equation}

\begin{equation}
\label{cond3}
    \textrm{if }(t_i,t_{i-2}) \in A(D) \textrm{, then $t_i \in N'_{3j+1}$ and $t_{i-2} \in N'_{3(j-1)+2}$, for some $3j < s$}.
\end{equation}

\end{fleqn}

\end{defn}

\begin{lema}
\label{lema:kkernelcaminhopropriedades}
Let $D$ be a digraph and let $(N_0,N_1,\dots,N_{3p})$ be a $3$-substitution sequence of $D$ starting at $x_0 \in V(D)$ and a $3$-kernel $K \subseteq D-x_0$. For every $s \leq 3p$ and every vertex $v \in N_s$ there exists a $(v,x_0)$-path $T=(t_s = v, t_{s-1}, \dots, t_1, t_0 = x_0)$ such that $T$ is a road.
\end{lema}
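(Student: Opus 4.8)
The natural plan is strong induction on the index $s$: to build the road to $x_0$ for a vertex $v \in N_s$ I would attach a short initial segment (of length one or two) to a road supplied by the induction hypothesis for a vertex of strictly smaller index, and then verify the four road conditions for the enlarged path. The base case $s=0$ is immediate, since $N_0=\{x_0\}$ forces $v=x_0$ and the trivial path $T=(x_0)$ is a road. For the step I would treat the three residues of $s$ modulo $3$ separately, in each case reading off the vertex to attach to from the defining equation of the set that contains $v$.

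If $s=3k+1$ then $v\in N^-(N_{3k})\cap K$, so some $w\in N_{3k}$ satisfies $(v,w)\in A(D)$; prepending $v$ to a road from $w$ (which exists as $3k<s$) and placing $t_{3k+1}=v$, $t_{3k}=w$ gives the candidate road. If $s=3k+2$ then $v\in N^{2-}(N_{3k})\cap K$, so there are $w\in N_{3k}$ and an intermediate vertex $m$ with $(v,m),(m,w)\in A(D)$, and I prepend $v,m$ to a road from $w$, placing $t_{3k+2}=v$, $t_{3k+1}=m$, $t_{3k}=w$. The only new instance of Condition~(\ref{cond1}) to check is the one for the top block $i=k$: since $v\in N_{3k+2}$ we have $v\notin N'_{3k+2}$, so I must show $m\notin N_{3k+1}$. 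This is forced by $3$-independence: $v\in N_{3k+2}\subseteq K$ and $d(v,m)=1$, so $m\notin K$, whence $m\notin N_{3k+1}\subseteq K$; and then $m\in N^-(N_{3k})\setminus N_{3k+1}=N'_{3k+1}$, which also records the type of $m$ for later use.

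The case $s=3k+3$ is the substantial one, because here $v$ is merely a vertex of the $3$-kernel $N_{3k+3}$ of $D[M_{3k+3}]$ and no neighbour in $N_{3k}$ is handed to us. The first task is to produce a \emph{block-$k$} absorbing vertex $u\in(N_{3k+1}\cup N_{3k+2})\cap\Delta^{2+}(v)$. Existence follows from the $M$-set definitions: $K$ is $2$-absorbent in $D-x_0$, so $v$ has some $u\in\Delta^{2+}(v)\cap K\subseteq\bigcup_{k'\le k}(N_{3k'+1}\cup N_{3k'+2})$, and if no such $u$ lay in block $k$ then $v$ would meet the defining conditions of $M_{3k}$ and hence lie in $M_{3k}$, contradicting $v\in M_{3k+3}$ (which demands $v\notin\bigcup_{k'\le k}M_{3k'}$). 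The index gap must then match the distance: if $u\in N_{3k+1}$, with witness $w'\in N_{3k}$ at distance one from $u$, then $d(v,u)=1$ would give $w'\in\Delta^{2+}(v)\cap N_{3k}$, contradicting the second clause of $M_{3k+3}$, so $d(v,u)=2$ and I attach along $v\to m'\to u=t_{3k+1}$. When the block-$k$ absorber lies in $N_{3k+2}$ I need $d(v,u)=1$ so that $u=t_{3k+2}$; the hard part of the whole lemma is exactly to show one may always arrange this, i.e.\ to rule out (or reroute around) a block-$k$ absorber sitting in $N_{3k+2}$ at distance two, using the $3$-independence of $K$, the $3$-independence of $N_{3k+3}$ inside $D[M_{3k+3}]$, and the fact that (by disjointness of the sequence) an $N_{3k+2}$-vertex has no out-neighbour in $N_{3k}$.

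Across all cases the remaining verification is Condition~(\ref{cond3}), forbidding a length-two chord $(t_i,t_{i-2})$ unless $t_i\in N'_{3j+1}$ and $t_{i-2}\in N'_{3(j-1)+2}$; the conditions below the attachment point are inherited from the road given by induction, so only chords meeting the one or two freshly prepended vertices are at issue. I expect this, together with the distance-two difficulty above, to be the main obstacle. The intended argument is that a chord incident with a prepended ``real'' vertex (one lying in some $N_{3k+i}\subseteq K$, or absorbing an $N_{3k}$-vertex within distance two) would either place two vertices of $K$ within distance two of each other, contradicting $3$-independence, or would exhibit a vertex of $\bigcup_{k'\le k}N_{3k'}$ inside $\Delta^{2+}(v)$, contradicting the definition of $M_{3k+3}$; the surviving chords therefore join intermediate vertices, and Condition~(\ref{cond1}) together with the way $m$ was identified above pins their types to $N'_{3j+1}$ and $N'_{3(j-1)+2}$.
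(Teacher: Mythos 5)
Your plan has the same skeleton as the paper's proof — induction on $s$, a case split on $s \bmod 3$, and prepending one or two vertices to a shorter road — and your treatment of the residues $1$ and $2$ (including the observation that the middle vertex $m$ cannot lie in $N_{3k+1}$ by $3$-independence of $K$, hence lies in $N'_{3k+1}$) coincides with the paper's. But the proposal stops short at exactly the two points you yourself flag, and those two points are the substance of the lemma, so this is a genuine gap rather than a complete proof. For the case $v \in N_{3k+3}$, you classify the block-$k$ absorber by its distance from $v$ and then concede that the sub-case of an absorber in $N_{3k+2}$ at distance two is ``the hard part of the whole lemma,'' to be ``ruled out or rerouted around'' with tools you name but never deploy; listing the $3$-independence of $K$ and of $N_{3k+3}$ is not an argument. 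The paper organizes this case differently and never classifies by distance: it reads off from the construction of $M_{3(k+1)}$ that $v$ has an out-neighbour $u$ at distance exactly two from $N_{3k}$, i.e. $u \in N_{3k+2} \cup N'_{3k+2}$ (the middle vertex of an absorbing path cannot be at distance one from $N_{3k}$, since that would put a vertex of $N_{3k}$ inside $\Delta^2(v)$); if $u \in N_{3k+2}$ the induction hypothesis applies to $u$ directly, and otherwise the road is continued as $(v,u,w)$ with $w \in N_{3k+1}$, which is precisely the pairing that condition (\ref{cond1}) demands.

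The second gap is more instructive, because here you miss the paper's key structural idea. You propose to verify condition (\ref{cond3}) inside the induction, chord by chord, checking only chords that touch the freshly prepended vertices — and then you only sketch how those checks would go (``I expect\dots the intended argument is that\dots''). The paper does not verify (\ref{cond3}) during the induction at all: it first proves by induction only the existence of a path satisfying (\ref{cond1}) and (\ref{cond2}), and then shows in a separate, self-contained step that \emph{any} path satisfying (\ref{cond1}) and (\ref{cond2}) automatically satisfies (\ref{cond3}). That step is a four-way case analysis on the set containing $t_{i-2}$: if $t_{i-2} \in N_{3k}$ the chord is incompatible with (\ref{cond1}); if $t_{i-2} \in N_{3k+1} \cup N'_{3k+1}$ then $(t_i,t_{i-2},t_{i-3})$ is an $(N_{3(k+1)},N_{3k})$-path of length two, contradicting Lemma \ref{lema:kkernelabsorventequaseindependente}; if $t_{i-2} \in N_{3k+2}$ then $t_i$ or $t_{i+1}$ violates the $3$-independence of $K$; so only the allowed configuration $t_i \in N'_{3(k+1)+1}$, $t_{i-2} \in N'_{3k+2}$ survives. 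Your sketch gestures at the first and third mechanisms but never organizes the cases, and in particular never isolates the second one, where the contradiction comes not from $K$ (neither endpoint of the offending path lies in $K$) but from the order property of $(N,N)$-paths established in Lemma \ref{lema:kkernelabsorventequaseindependente}. Without that decoupling — or a fully worked-out inductive substitute for it — the proof does not close.
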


\begin{proof}
Let $v \in N_s$. We first prove the existence of a $(v,x_0)$-path $T$ with length $s$ satisfying properties (\ref{cond1}) and (\ref{cond2}). After that, we prove it satisfies property (\ref{cond3}). We proceed by induction in $s$.

\textbf{Base case}: (s = 1) By the definition of $N_1$, $x_0 \in N^+(v)$. Therefore, it suffices to take $T = (v,x_0)$.

\textbf{Induction hypothesis}: Assume that for every $s'<s$ and every vertex $v \in N'_{s'}$, there exists a path $T' = (v = t_{s'}, \dots, x_0 = t_0)$ with length $s'$ satisfying properties (\ref{cond1}) and (\ref{cond2}).

\textbf{Induction step}: Let $v \in N_s$. By construction, $v \in (N^-(N'_{s-1}) \cup N^-(N_{s-1}))$. We will show that there exists a path $T$ satisfying (\ref{cond1}) and (\ref{cond2}).

\begin{itemize}
    \item If $v \in N_{3i+1}$, for some $i$, then there is $u \in N_{3i}$ such that $(v,u) \in A(D)$. From the induction hypothesis, there is a path $T' = (u = t_{s'}, \dots, x_0 = t_0)$ satisfying properties (\ref{cond1}) and (\ref{cond2}). Clearly $v \not \in V(T')$, because $s > s'$. Concatenating both paths, we have that $T =  (v,u) \cup T'$ has length $s$ and satisfies (\ref{cond1}) e (\ref{cond2}).
    
    \item If $v \in N_{3i+2}$, for some $i$, then there is $u \in N'_{3i+1}$ such that $(v,u) \in A(D)$. Note that $u \not \in N_{3i+1}$ since it would contradict the $3$-independence of the $3$-kernel $K$. Since $u \in N'_{3i+1}$, there exists $w \in N_{3i}$ such that $(u,w) \in A(D)$. Note that the path $(v,u,w)$ satisfies both properties (\ref{cond1}) and (\ref{cond2}). From the induction hypothesis, there is a path $T' = (t_{s'} = w, \dots, t_0 = x_0)$ satisfying (\ref{cond1}) and (\ref{cond2}). From the definition of each set in the $3$-substitution sequence, clearly $v,u \not \in T'$, because $3i+2 > 3i+1 > 3i$. Concatenating both paths, we have that $T =  (v,u,w) \cup T'$ has length $s$ and satisfies (\ref{cond1}) and (\ref{cond2}).
    
    \item If $v \in N_{3i}$, for some $i$, then there is $u \in N'_{3(i-1)+2} \cup N_{3(i-1)+2}$ such that $(v,u) \in A(D)$. If there exists $u \in N_{3(i-1)+2}$, then, from the induction hypothesis, there is a path $T' = (t_{s'} = u, \dots, t_0 = x_0)$ satisfying (\ref{cond1}) and (\ref{cond2}). Concatenating both paths, we have that $T =  (v,u) \cup T'$ has length $s$ and satisfies (\ref{cond1}) and (\ref{cond2}). If does not exist $u \in N_{3(i-1)+2}$ such that $(v,u) \in A(D)$, then there is $w \in N_{3(i-1)+1}$. Let $T'' = (v,u,w)$. From the induction hypothesis, there is a path $T' = (t_{s'} = w, \dots, t_0 = x_0)$ satisfying (\ref{cond1}) and (\ref{cond2}). Concatenating both paths, we have that $T =  T'' \cup T'$ has length $s$ and satisfies (\ref{cond1}) and (\ref{cond2}).
\end{itemize} We now prove (\ref{cond3}) is satisfied. 

Let $T = (t_s, \dots, t_0)$ be a path satisfying properties (\ref{cond1}) and (\ref{cond2}). Assume, for the sake of contradiction, that there exists $(t_i,t_{i-2})$ in $D$, such that $t_i \not \in N'_{3k+1}$ or $t_{i-2} \not \in N'_{3(k-1)+2}$, for some $k > 0$ and $i>2$. Let's analyze each case and conclude it is an absurd the existence of such chord.

\begin{itemize}
    \item Assume that $t_{i-2} \in N_{3k}$, for some $k$. By the definition of $N_{3k+2}$ and $N'_{3k+2}$, $t_i \not \in N_{3k+2} \cup N'_{3k+2}$, since $d(t_i,t_{i-2}) = 1$. This contradicts property (\ref{cond1}). Figure \ref{fig:casoa} depicts this case.
    
    \begin{figure}
    \centering
    \begin{tikzpicture}
                \node[vertex] (-2) {};
                \node[vertex, left of = -2] (-1) {};
                \node[vertex, left of = -1] (0) {};
                
                \node[above of = -2, yshift=-0.2cm] {$t_{i-2} \in N_{3k}$};
                \node[above of = 0, yshift=-0.2cm] {$t_{i} \in N_{3k+2} \cup N'_{3k+2}$};
                
                \node[right of = -2] (rd) {$\dots$};
                \node[left of = 0] (ld) {$\dots$};
                
                \draw[directed] (ld) to (0);
                \draw[directed] (-2) to (rd);

                \draw[directed] (0) to (-1);
                \draw[directed] (-1) to (-2);
                \draw[directed, bend right = 40] (0) to (-2);
                
    \end{tikzpicture}
    \caption{Example of the case where $t_{i-2} \in N_{3k}$.}
    \label{fig:casoa}
\end{figure}
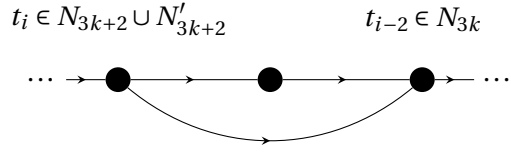

    \item Assume that $t_{i-2} \in N_{3k+1} \cup N'_{3k+1}$, for some $k$. The vertex $t_{i-3}$ exists in $T$, because we know it ends in $t_0 \in N_0$. It follows from (\ref{cond2}) that $t_{i-3} \in N_3k$ and $t_i \in N_{3(k+1)}$. The path $(t_i,t_{i-2}, t_{i-3})$ contradicts Lemma \ref{lema:kkernelabsorventequaseindependente} since it is a $(N_{3(k+1)}, N_{3k})$-path with length two. Therefore, $t_{i-2} \not \in N_{3k+1}$. Figure \ref{fig:casob} depicts this case.
    
    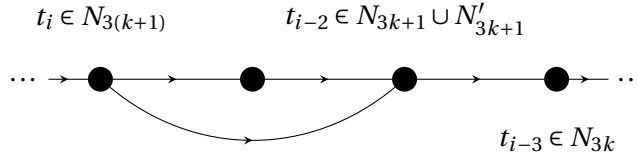
\begin{figure}
    \centering
    \begin{tikzpicture}
                \node[vertex] (-2) {};
                \node[vertex, left of = -2] (-1) {};
                \node[vertex, left of = -1] (0) {};
                \node[vertex, right of = -2] (-3) {};
                
                \node[above of = -2, yshift=-0.2cm] {$t_{i-2} \in N_{3k+1} \cup N'_{3k+1}$};
                \node[above of = 0, yshift=-0.2cm] {$t_{i} \in N_{3(k+1)}$};
                \node[below of = -3, yshift=0.2cm] {$t_{i-3} \in N_{3k}$};
                
                \node[right of = -3] (rd) {$\dots$};
                \node[left of = 0] (ld) {$\dots$};
                
                \draw[directed] (ld) to (0);
                \draw[directed] (-3) to (rd);

                \draw[directed] (0) to (-1);
                \draw[directed] (-1) to (-2);
                \draw[directed] (-2) to (-3);
                \draw[directed, bend right = 40] (0) to (-2);
                
    \end{tikzpicture}
    \caption{Example of the case where $t_{i-2} \in N_{3k+1}$ or $N'_{3k+1}$.}
    \label{fig:casob}
\end{figure}

    \item Assume that $t_{i-2} \in N_{3k+2}$, for some $k$. If $i = s$, then $t_i \in N_{3(k+1)+1}$, which contradicts the $3$-independence of the $3$-kernel $K$. So, $t_i \not \in N_{3(k+1)+1}$ and there is $t_{i+1} \in V(T)$. From (\ref{cond1}), since $t_i \not \in N_{3(k+1)+1}$, then $t_{i+1} \in N_{3(k+1)+2}$. But then $d(t_{i+1},t_{i-2}) = 2$ and it contradicts the $3$-independence of $K$. Figure \ref{fig:casoc} depicts this case.
    
    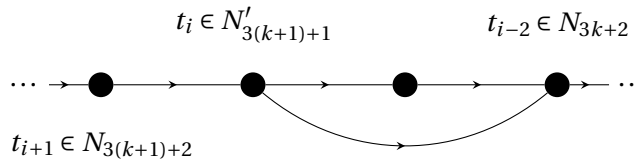
\begin{figure}
    \centering
    \begin{tikzpicture}
                \node[vertex] (-2) {};
                \node[vertex, left of = -2] (-1) {};
                \node[vertex, left of = -1] (0) {};
                \node[vertex, left of = 0] (1) {};
                
                \node[above of = -2, yshift=-0.2cm] {$t_{i-2} \in N_{3k+2}$};
                \node[above of = 0, yshift=-0.2cm] {$t_i \in N'_{3(k+1)+1}$};
                \node[below of = 1, yshift=0.2cm] {$t_{i+1} \in N_{3(k+1)+2}$};
                
                \node[right of = -2] (rd) {$\dots$};
                \node[left of = 1] (ld) {$\dots$};
                
                \draw[directed] (ld) to (1);
                \draw[directed] (-2) to (rd);

                \draw[directed] (0) to (-1);
                \draw[directed] (-1) to (-2);
                \draw[directed] (1) to (0);
                \draw[directed, bend right = 40] (0) to (-2);
                
    \end{tikzpicture}
    \caption{Example of the case where $t_{i-2} \in N_{3k+2}$.}
    \label{fig:casoc}
\end{figure}

    \item From the last cases, $t_{i-2} \in N'_{3k+2}$, for some $k$. From (\ref{cond1}), $t_i \in N_{3(k+1)+1}$ or $N'_{3(k+1)+1}$ and $t_{i-3} \in N_{3k+1}$. By our assumption, $t_i \in N_{3(k+1)+1}$. This contradicts the $3$-independence of the $3$-kernel $K$, because $d(t_i,t_{i-3}) = 2$. Figure \ref{fig:casod} depicts this case.
    
    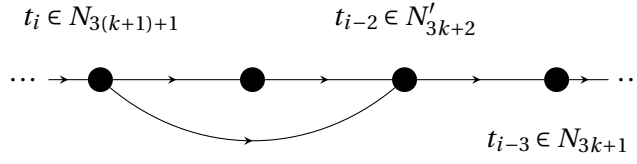
\begin{figure}
    \centering
    \begin{tikzpicture}
                \node[vertex] (-2) {};
                \node[vertex, left of = -2] (-1) {};
                \node[vertex, left of = -1] (0) {};
                \node[vertex, right of = -2] (-3) {};
                
                \node[above of = -2, yshift=-0.2cm] {$t_{i-2} \in N'_{3k+2}$};
                \node[above of = 0, yshift=-0.2cm] {$t_{i} \in N_{3(k+1)+1}$};
                \node[below of = -3, yshift=0.2cm] {$t_{i-3} \in N_{3k+1}$};
                
                \node[right of = -3] (rd) {$\dots$};
                \node[left of = 0] (ld) {$\dots$};
                
                \draw[directed] (ld) to (0);
                \draw[directed] (-3) to (rd);

                \draw[directed] (0) to (-1);
                \draw[directed] (-1) to (-2);
                \draw[directed] (-2) to (-3);
                \draw[directed, bend right = 40] (0) to (-2);
                
    \end{tikzpicture}
    \caption{Example of the case where $t_i \in N_{3(k+1)+1}$.}
    \label{fig:casod}
\end{figure}
    
\end{itemize} We conclude that if $(t_i,t_{i-2}) \in A(D)$, then $t_i \in N'_{3(k+1)+1}$ and $t_{i-2} \in N'_{3k+2}$, for some $k$. $\blacksquare$
\end{proof}

\begin{lema}
\label{lema:umaunicacordadessetipo}
Let $D$ be a digraph, let $S = (N_0,N_1,\dots,N_{3p})$ be a $3$-substitution sequence starting at $x_0 \in V(D)$ and the $3$-kernel $K\subseteq D-x_0$, and let $(N'_1,N'_2, \dots, N'_{3p-1})$ be the sequence of intermediate sets of $S$. Let $T = (t_s, \dots, t_0)$ be a road of $D$. If $(t_i,t_{i-2}) \in A(D)$, for some $2 < i <s$, then there does not exist $(t_{i'},t_{i'-2}) \in A(D)$ for $i' \neq i$. Furthermore, $t_{3k'+2} \in N_{3k+2}$ for every $3k'+2 > i$.
\end{lema}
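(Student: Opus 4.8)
The plan is to read the road level by level, turning property (\ref{cond1}) into a clean dichotomy and then forbidding one of the two transitions by means of the $3$-independence of $K$. Group the interior positions of $T$ into \emph{levels}, where level $k$ consists of the positions $3k,\,3k+1,\,3k+2$. Property (\ref{cond2}) pins $t_{3k}\in N_{3k}$, and since every such vertex lies in $N_{3k+1}\cup N'_{3k+1}$ respectively $N_{3k+2}\cup N'_{3k+2}$, property (\ref{cond1}) leaves exactly two possibilities at each level: either $t_{3k+1}\in N_{3k+1}$ and $t_{3k+2}\in N'_{3k+2}$ (call this type~A), or $t_{3k+1}\in N'_{3k+1}$ and $t_{3k+2}\in N_{3k+2}$ (type~B). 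Here I use only the immediate facts that $N_{3k+1},N_{3k+2}\subseteq K$ and that the $N$- and $N'$-sets at each index are disjoint. The whole lemma reduces to controlling the pattern of types along $T$.

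First I would record what the hypothesised chord forces. By property (\ref{cond3}), $(t_i,t_{i-2})\in A(D)$ gives $i=3j+1$ with $t_i\in N'_{3j+1}$ and $t_{i-2}\in N'_{3(j-1)+2}$; hence level $j$ is of type~B and level $j-1$ is of type~A. Via (\ref{cond1}) this already yields $t_{3j+2}\in N_{3j+2}$, which is the base case $k'=j$ of the ``furthermore'' assertion.

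The crux is to show that a type-B level can never be followed, going up the road, by a type-A level. Suppose level $k'$ is of type~B, so $t_{3k'+2}\in N_{3k'+2}\subseteq K$, and suppose for contradiction that level $k'+1$ is of type~A, so $t_{3(k'+1)+1}=t_{3k'+4}\in N_{3k'+4}\subseteq K$. Since $T$ contains the subpath $t_{3k'+4}\to t_{3k'+3}\to t_{3k'+2}$, we have $d(t_{3k'+4},t_{3k'+2})\le 2$, with both endpoints in $K$; this contradicts the $3$-independence of $K$. I expect this step to be the main obstacle, since it is where the membership dichotomy of (\ref{cond1}) is converted into two forbidden $K$-vertices at distance at most two. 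Starting from level $j$ and iterating this ``no B-then-A'' principle propagates type~B to every level $k'\ge j$ by induction, i.e.\ $t_{3k'+2}\in N_{3k'+2}$ for all $3k'+2>i$, which is the second assertion.

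Uniqueness then follows formally from this propagation. Any chord $(t_{i'},t_{i'-2})$ is, again by (\ref{cond3}), of the shape $i'=3j'+1$ and requires level $j'-1$ to be of type~A. If $j'>j$, then $j'-1\ge j$, so the propagation result makes level $j'-1$ of type~B, a contradiction; if $j'<j$, the symmetric argument (applying the same propagation to the chord at $i'$) makes level $j-1$ of type~B, contradicting the requirement coming from the chord at $i$. Hence $i'=i$ and the chord is unique.
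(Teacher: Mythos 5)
Your proposal is correct and takes essentially the same approach as the paper: the paper likewise uses property (\ref{cond3}) to pin the chord's endpoints in $N'_{3(k+1)+1}$ and $N'_{3k+2}$, then repeatedly applies the biconditional (\ref{cond1}) together with the $3$-independence of $K$ (two $K$-vertices joined by a subpath of $T$ of length two) to force $t_{3k'+2}\in N_{3k'+2}$ at every level above the chord, so that a second chord, which by (\ref{cond3}) would need a vertex of some $N'_{3k'+2}$ above the first one, cannot exist. Your ``type A / type B'' bookkeeping, the explicit induction, and the symmetric handling of $j'<j$ versus $j'>j$ are just a cleaner packaging of that same propagation argument (and they make the ``furthermore'' clause explicit, which the paper leaves implicit).
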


\begin{proof}
Assume, for the sake of contradiction, that $(t_{i'},t_{i'-2}) \in A(D)$ for $i' \neq i$. Let $j$ be the smallest integer such that $(t_{j},t_{j-2}) \in A(D)$ and let $j'$ be the smallest integer greater than $j$ such that $(t_{j'},t_{j'-2}) \in A(D)$. Consider the path $(t_{j'}, t_{j'-1}, t_{j'-2}, \dots, t_{j}, t_{j-1},t_{j-2})$, illustrated by Figure \ref{fig:caminho2cordas}. From property (\ref{cond3}) of $T$, $t_{j-2} \in N'_{3k+2}$ and $t_{j} \in N'_{3(k+1)+1}$, for some $k$. We now prove that for no $\ell > j$ is true that $t_{\ell} \in N'_{3k'+2}$, for some $k'$. From property (\ref{cond1}) of $T$ we have that $t_{j+1} \in N_{3(k+1)+2}$. Assume that $t_{j+4} \in N'_{3(k+2)+2}$. Then $t_{j+3} \in N_{3(k+2)+1}$, by property (\ref{cond1}) of $T$. This contradicts the $3$-independence of the $3$-kernel $K$, since $d(t_{j+3},t_{j+1}) = 2$. Repeating the same argument, we conclude that $t_{j'-2} \not \in N'_{3k'+2}$,  for any $k'$. This contradicts property (\ref{cond3}) of $T$. Therefore, if there exists a short chord in $T$, it is unique. $\blacksquare$
\end{proof}

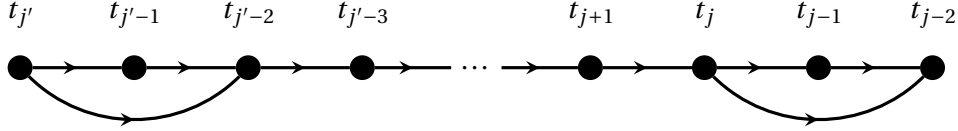
\begin{figure}
    \centering
    \begin{tikzpicture}[line width = 1.2,vertex/.append style={node distance=1.5cm}]
                \node[vertex] (0) {};
                \node[vertex, left of = 0] (1) {};
                \node[vertex, left of = 1] (2) {};
                \node[vertex, left of = 2] (3) {};
                \node[left of = 3, xshift=-0.5cm] (a) {$\dots$};
                \node[vertex, left of = a] (4) {};
                \node[vertex, left of = 4] (5) {};
                \node[vertex, left of = 5] (6) {};
                \node[vertex, left of = 6] (7) {};
                
                \draw[directed] (7) to (6);
                \draw[directed] (6) to (5);
                \draw[directed] (5) to (4);
                \draw[directed] (4) to (a);
                \draw[directed] (a) to (3);
                \draw[directed] (3) to (2);
                \draw[directed] (2) to (1);
                \draw[directed] (1) to (0);
                
                \node[above of = 0, yshift=-0.3cm] {$t_{j-2}$};
                \node[above of = 1, yshift=-0.3cm] {$t_{j-1}$};
                \node[above of = 2, yshift=-0.3cm] {$t_{j}$};
                \node[above of = 3, yshift=-0.3cm] {$t_{j+1}$};
                \node[above of = 4, yshift=-0.3cm] {$t_{j'-3}$};
                \node[above of = 5, yshift=-0.3cm] {$t_{j'-2}$};
                \node[above of = 6, yshift=-0.3cm] {$t_{j'-1}$};
                \node[above of = 7, yshift=-0.3cm] {$t_{j'}$};
                
                \draw[directed, bend right = 45] (7) to (5);
                \draw[directed, bend right = 45] (2) to (0);

    \end{tikzpicture}
    \caption{A path $(t_{j'},\dots, t_{j-2})$ and its two short chords.}
    \label{fig:caminho2cordas}
\end{figure}

\subsection{Applying the $3$-substitution method}

In this section, we prove that a strongly connected digraph whose every circuit of length not dividable by three has four short chords has a $3$-kernel. The strategy employed in the demonstration is the following. We prove a lemma that guarantees the existence of a minimal $(N_0,N_i)$-path whose length is equivalent to the additive inverse of $i$ modulo three, for every $i <3p$ with $i \neq 1$. In other words, the sum of the length of the path with the index of the set results in a number dividable by three. The concatenation of this path with the path from Lemma \ref{lema:kkernelcaminhopropriedades} results, therefore, in a circuit of length dividable by three. We then use this fact to show that there does not exist a path with length at most two between vertices in the pre-$3$-kernel. First, we prove the lemma.

\begin{lema}
\label{lema:kkernelcaminhoinversoaditivo}
Let $D$ be a digraph, let $S = (N_0,N_1,\dots,N_{3p})$ be a sequence of $3$-substitution of $D$ starting at $x_0 \in V(D)$ and the $3$-kernel $K\subseteq D-x_0$, and let $(N'_1,N'_2, \dots, N'_{3p-1})$ be the sequence of intermediate sets of $S$. Let $T = (t_{s'}, \dots, t_0)$ be a road of $D$. Assume that every circuit $C$ of $D$ such that $|C| \not \equiv 0 \Mod 3$ has four short chords. Then for every integer $s \leq s'$, $s \neq 1$, there exists a minimal $(x_0,t_{s})$-path of length $\equiv -s \Mod 3$ in $D$.
\end{lema}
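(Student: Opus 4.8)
The plan is to reduce the statement to a claim about the geodesic distance $d(x_0,t_s)$ and then exploit the four-short-chords hypothesis through a circuit built from the road. First I would fix $s\le s'$ with $s\neq 1$; the case $s=0$ is immediate (the empty path has length $0\equiv -0 \Mod 3$), so assume $s\ge 2$. Using the strong connectivity available throughout this section, I would take a shortest (hence minimal) $(x_0,t_s)$-path $Q$, say of length $\ell$, and show $\ell\equiv -s \Mod 3$; this $Q$ is then the desired path. The key object is the closed walk obtained by following $Q$ from $x_0$ to $t_s$ and returning along the road segment $R_s=(t_s,t_{s-1},\dots,t_0=x_0)$, of total length $\ell+s$. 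In the generic case, where $Q$ and $R_s$ meet only at $x_0$ and $t_s$, this is a genuine circuit $C$ with $|C|=\ell+s$, and the goal becomes proving $|C|\equiv 0\Mod 3$.

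The heart of the argument is to count the short chords of $C$ and show there are at most three, so that the hypothesis (which would demand four whenever $|C|\not\equiv 0\Mod 3$) forces $|C|\equiv 0\Mod 3$. I would split the short chords $(c_m,c_{m+2})$ of $C$ into three groups. First, those with both endpoints on $Q$: none can exist, since such a chord yields a strictly shorter $(x_0,t_s)$-path, contradicting minimality of $Q$. Second, those interior to the road, of the form $(t_i,t_{i-2})$: the two boundary candidates $(t_s,t_{s-2})$ and $(t_2,t_0)=(t_2,x_0)$ are impossible, because property~(\ref{cond3}) of a road would force $t_s$ (resp. $x_0$) to be an intermediate vertex, contradicting $t_s\in N_s$ and $x_0\in N_0$ (an intermediate vertex lies in no $N_i$); and among the remaining interior candidates Lemma~\ref{lema:umaunicacordadessetipo} permits at most one. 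Third, the two chords straddling a junction, namely $(c_{\ell-1},t_{s-1})$ at $t_s$ and $(t_1,c_1)$ at $x_0$: at most one each. Summing, $C$ has at most $0+1+2=3<4$ short chords, so $|C|\not\equiv 0\Mod 3$ is impossible and $\ell\equiv -s\Mod 3$ follows.

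The step I expect to be the main obstacle is dispensing with the assumption that $Q$ and $R_s$ are internally disjoint, since in general their union is only a closed walk. I would handle this by decomposing that closed walk into simple circuits at the shared vertices $q_i=t_j$; because $Q$ is a shortest path and the road is itself a simple path, each resulting circuit should split into one shortest-path segment and one road segment meeting at exactly two junctions, so the count of the previous paragraph applies verbatim and shows each constituent circuit has length $\equiv 0\Mod 3$. Summing these lengths then recovers $\ell+s\equiv 0\Mod 3$. Verifying that the decomposition genuinely produces circuits of this ``one $Q$-segment plus one road-segment'' shape, and that the single road short chord permitted by Lemma~\ref{lema:umaunicacordadessetipo} is not overcounted across distinct segments, is the delicate bookkeeping; the excluded case $s=1$ (where $R_s$ degenerates to a single arc, and which is not needed for the intended application, in which $s$ is always a multiple of three) is set aside precisely to avoid these boundary degeneracies.
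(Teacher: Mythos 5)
Your proposal is correct, and its engine is identical to the paper's: a circuit consisting of one shortest-path segment and one road segment can carry at most three short chords (none inside the shortest-path segment by minimality, at most one of the form $(t_i,t_{i-2})$ by Lemma~\ref{lema:umaunicacordadessetipo} together with property~(\ref{cond3}) ruling out the boundary candidates, and at most one straddling chord at each of the two junctions), so the four-chord hypothesis forces the circuit's length to be $\equiv 0 \Mod 3$. Where you genuinely diverge is in the bookkeeping for intersections between the shortest path and the road. The paper runs an induction along the intersection points $z_0,z_1,\dots$ of the minimal path $W$ with $T$ (with $t_1$ explicitly excluded from the intersection list), proving that each prefix $(w_0,W,z_{\ell'})$ with $z_{\ell'}=t_k$ has length $\equiv -k \Mod 3$; each inductive step builds a single circuit between two intersection points, which forces a case split on whether the new intersection lies before or after the previous one along $T$ (the second case needs re-anchoring at a third intersection $t_{r'''}$), plus a side condition choosing the previous intersection at distance greater than one. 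You instead sum over a decomposition of the single closed walk $Q\cup R_s$ into circuits. This buys real simplifications: the greedy decomposition does produce circuits of the one-$Q$-segment-plus-one-road-segment shape you need, because at every stage the not-yet-closed part of the walk is a prefix of $Q$ followed by a contiguous, correctly oriented road segment, so the paper's two cases collapse into one; and since you never track per-prefix congruences, you do not need the paper's exclusion of $t_1$ (the per-prefix claim is false at $t_1$, which is exactly why the statement excludes $s=1$, but your global sum is indifferent to it). Your worry about overcounting the road chord is unfounded: Lemma~\ref{lema:umaunicacordadessetipo} gives uniqueness in all of $T$, so at most one constituent circuit contains such a chord. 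The one degeneracy you do not mention, and which the paper treats explicitly, is a constituent circuit of length two, arising when $Q$ uses an arc $(t_a,t_{a+1})$ whose reverse $(t_{a+1},t_a)$ is a road arc; such a circuit has length $\not\equiv 0 \Mod 3$ yet cannot have any chord, which would break your summation. Under the hypothesis as literally stated such a digon cannot exist in $D$ at all, so your argument survives; but since the paper deliberately avoids relying on that reading (it re-chooses intersection points so that the constructed circuit is never a $2$-cycle), you should add one sentence disposing of digons before summing.
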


\begin{proof}
Let $W = (w_0 = t_0, \dots, w_n = t_s)$ be a minimal $(t_0,t_s)$-path. Such path exists because $D$ is strongly connected. Let $Z = (z_{0}, \dots, z_{\ell})$ be the sequence, ordered by order of appearance in $W$, of the vertices in $W$ that intercept $T$, with exception of $t_1$. Figure \ref{fig:Z} depicts an example where $T$ and $W$ intercept. Note that both paths have, at leat, two vertices in common: $w_0=t_0$ e $w_n=t_s$. Therefore, $z_0 = w_0 = t_0$ and $z_\ell = w_n = t_s$. We will show that for every $z_{\ell'}$: if $z_{\ell'} = t_k$, then the $(t_0,t_k)$-path $(t_0,W,z_{\ell'})$ has length $\equiv -k \Mod 3$. Note that, if this is true, then the circuit $(w_0,W,z_{\ell'}) \cup (z_{\ell'},T,x_0)$ has length $\equiv 0 \Mod 3$.

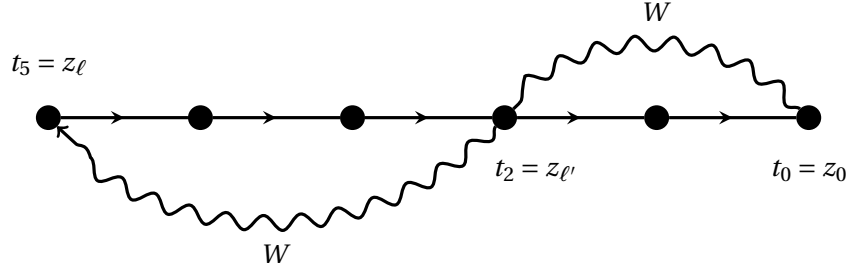
\begin{figure}
    \centering
    \begin{tikzpicture}[line width = 1.2]
                \node[vertex] (0) {};
                \node[vertex, left of = 0] (1) {};
                \node[vertex, left of = 1] (2) {};
                \node[vertex, left of = 2] (3) {};
                \node[vertex, left of = 3] (4) {};
                \node[vertex, left of = 4] (5) {};
                
                \draw[directed] (5) to (4);
                \draw[directed] (4) to (3);
                \draw[directed] (3) to (2);
                \draw[directed] (2) to (1);
                \draw[directed] (1) to (0);
                
                \draw[decorate,decoration={snake,amplitude=1mm,segment length=0.5cm,post length=1mm}, bend right = 45] (0) to (2);
                \draw[->,decorate,decoration={snake,amplitude=1mm,segment length=0.5cm,post length=1mm}, bend left = 45] (2) to (5);
                
                \node[below of = 0, yshift = 0.3cm] {$t_0 = z_0$};
                \node[below of = 2, xshift=0.4cm, yshift = 0.3cm] {$t_2 = z_{\ell'}$};
                \node[above of = 5,  yshift = -0.3cm] {$t_5 = z_\ell$};
                
                \node[above of = 1, yshift=0.4cm] {$W$};
                \node[below of = 4, yshift=-0.8cm, xshift=1cm] {$W$};

    \end{tikzpicture}
    \caption{Example of paths $T$ and $W$. We have $\ell' = 1$ and $\ell = 2$.}
    \label{fig:Z}
\end{figure}

\textbf{Base case}: ($\ell' = 0$) Clearly, $z_0 = t_0 = w_0$. Since $d(t_0,t_0) = 0$, the result follows.

\textbf{Induction hypothesis}: For every $\ell'' < \ell'$, a minimal $(t_0,t_k)$-path $(t_0,W,z_{\ell''} = t_k)$ has length $\equiv -k \Mod 3$.

\textbf{Inductive step}: Let $z_{\ell'} = w_{n'} = t_{r'}$ and let $z_{\ell''} = w_{n''} = t_{r''}$ be the intersection in $Z$ preceding $z_{\ell'}$ such that $d(z_{\ell''}, z_{\ell'})>1$. We will show that if $(w_0,W,w_{n'} = z_{\ell'})$ does not have length $ \equiv -r' \Mod 3$, then $(w_{n''} = z_{\ell''},W, w_{n'} = z_{\ell'}) \cup (t_{r'} =  z_{\ell'},T,t_{r''}= z_{\ell''})$ is a circuit of length not dividable by three that can not have four short chords. Note that if we chose $z_{\ell''}$ such that $d(z_{\ell''}, z_{\ell'})=1$, then the circuit $(w_{n''} = z_{\ell''},W, w_{n'} = z_{\ell'}) \cup (t_{r'},T,t_{r''})$ is a cycle of length two if $d(z_{\ell'}, z_{\ell''})=1$. This cycle, although it does not have four short chords, is not a contradiction. Figure \ref{fig:casomeme} depicts an example of such case. We have two cases:

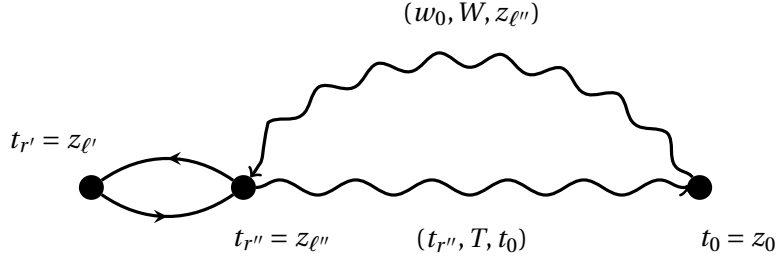
\begin{figure}
    \centering
    \begin{tikzpicture}[line width = 1.2]
                \node[vertex] (0) {};
                \node[vertex, left of = 0, xshift=-4cm] (1) {};
                \node[vertex, left of = 1] (2) {};

                \draw[->,decorate,decoration={snake,amplitude=1mm,segment length=0.8cm,post length=1mm}, bend right = 55] (0) to (1);
                \draw[directed, bend right = 35] (1) to (2);
                
                \draw[->,decorate,decoration={snake,amplitude=1mm,segment length=1cm,post length=0.3mm}] (1) to (0);
                \draw[directed, bend right = 35] (2) to (1);
                
                \node[below of = 0, xshift=0.5cm, yshift = 0.3cm] {$t_0 = z_0$};
                \node[below of = 1, xshift=0.5cm, yshift = 0.3cm] {$t_{r''} = z_{\ell''}$};
                \node[above of = 2, xshift=-0.5cm, yshift = -0.4cm] {$t_{r'} = z_{\ell'}$};
                
                \node[below of = 1, xshift=3cm, yshift = 0.3cm] {$(t_{r''},T,t_0)$};
                
                \node[below of = 1, xshift=3cm, yshift = 3.3cm] {$(w_{0},W,z_{\ell''})$};

    \end{tikzpicture}
    \caption{Example of the case where $(w_{n''} = z_{\ell''},W, w_{n'} = z_{\ell'}) \cup (t_{r'} =  z_{\ell'},T,t_{r''}= z_{\ell''})$ is a cycle of length two.}
    \label{fig:casomeme}
\end{figure}

\begin{itemize}
    \item $r' > r''$, that is, $t_{r'}$ precedes $t_{r''}$ in $T$. Consider the path $(w_0, \dots, w_n' = t_r')$. Assume, for the sake of contradiction, that $n' + r' \not \equiv 0 \Mod 3$. Consider the closed walk $(w_0, W, w_{n'}) \cup (t_{r'},T,t_0)$ of length $n' + r'$. It follows from the induction hypothesis that the path $(w_0,W,w_{n''})$ has length $n'' \equiv -r'' \Mod 3$. Consider the circuit $C = (w_{n''} = t_{r''}, W, w_{n'} = t_{r'})  \cup (t_{r'},T, t_{r''})$ of length $n' - n'' + r' - r''$. Since $n'' + r'' \equiv 0 \Mod 3$, we have that $|C| \equiv n' + r' \not \equiv 0 \Mod 3$. Figure \ref{fig:cas1} depicts an example of paths $(w_{0},W,z_{\ell''})$ and $(z_{\ell''},W,z_{\ell'})$. Therefore, $C$ must have four short chords. From the minimality of $W$, the only possible short chords in $C$ are $(t_{r''+1}, w_{n''+1})$, $(w_{n'-1}, t_{r'-1})$ and $(t_i, t_{i-2})$, for some $r''+2 < i < r'$. From Lemma \ref{lema:umaunicacordadessetipo}, the chord in form $(t_i, t_{i-2})$ is unique in $T$. Hence, the circuit can only have three short chords, a contradiction. So $(w_0, W, w_{n'} = t_{r'})$ has length $\equiv -r' \Mod 3$.
    
    \begin{figure}
    \centering
    \begin{tikzpicture}[line width = 1.2]
                \node[vertex] (0) {};
                \node[vertex, left of = 0, xshift=-4cm] (1) {};
                \node[vertex, left of = 1, xshift=-4cm] (2) {};

                \draw[->,decorate,decoration={snake,amplitude=1mm,segment length=0.8cm,post length=1mm}, bend right = 55, blue] (0) to (1);
                \draw[->,decorate,decoration={snake,amplitude=1mm,segment length=0.8cm,post length=1mm}, bend left = 55, red] (1) to (2);
                
                \draw[->,decorate,decoration={snake,amplitude=1mm,segment length=1cm,post length=0.3mm}, blue] (1) to (0);
                \draw[->,decorate,decoration={snake,amplitude=1mm,segment length=1cm,post length=0.3mm}, red] (2) to (1);
                
                \node[below of = 0, xshift=0.5cm, yshift = 0.3cm] {$t_0 = w_0 = z_0$};
                \node[below of = 1, xshift=0.5cm, yshift = 0.3cm] {$t_{r''} = z_{\ell''}$};
                \node[above of = 2, xshift=-0.5cm, yshift = -0.4cm] {$t_{r'} = w_{n'} = z_{\ell'}$};
                
                \node[below of = 1, xshift=3cm, yshift = 0.3cm] {$(t_{r''},T,t_0)$};
                \node[above of = 2, xshift=3cm, yshift = -0.4cm] {$(t_{r'},T,t_{r''})$};
                
                \node[below of = 1, xshift=3cm, yshift = 3.3cm] {$(w_{0},W,z_{\ell''})$};
                \node[above of = 2, xshift=3cm, yshift = -3.4cm] {$(z_{\ell''},W,z_{\ell'})$};

    \end{tikzpicture}
    \caption{An example depicting paths $(w_{0},W,z_{\ell''})$ and $(z_{\ell''},W,z_{\ell'})$. It follows from the induction hypothesis that the closed walk in blue has length $|(w_{0},W,z_{\ell''}) \cup (t_{r''},T,t_0)| \equiv 0 \Mod 3$. If $|W \cup T| \not \equiv 0 \Mod 3$, then circuit $C$, in red, has length $|(z_{\ell''},W,z_{\ell'}) \cup (t_{r'},T,t_{r''})| \not \equiv 0 \Mod 3$.}
    \label{fig:cas1}
\end{figure}
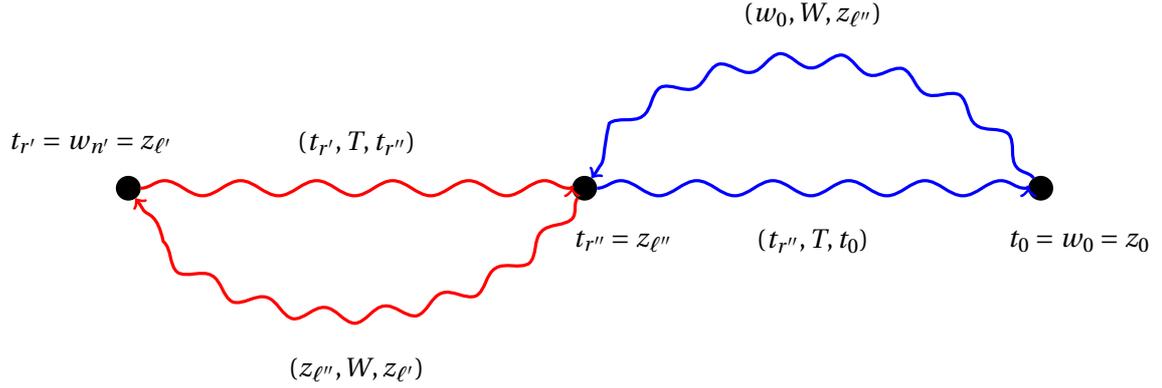

    \item $r'' > r'$, that is, $t_{r''}$ precedes $t_{r'}$ in $T$. Consider the path $(w_0, \dots, w_n' = t_r')$. Assume, for the sake of contradiction, that $n' + r' \not \equiv 0 \Mod 3$. Let $r'''$ be the greatest integer less than $r'$ such that $t_{r'''} = z_{\ell'''}$, for some $\ell'''$. Let $w_{n'''} = t_{r'''}$. Consider the closed walk $(w_0, W, w_{n'} = t_{r'}) \cup (t_{r'}, T, t_0)$ of length $n' + r'$. It follows from the induction hypothesis that $n''' \equiv -r''' \Mod 3$. Let $C = (w_{n'''}, W, w_{n'} = t_{r'}) \cup (t_{r'},T,t_{r'''})$ be a circuit with length $n' - n''' + r' - r'''$. Since $n''' + r''' \equiv 0 \Mod 3$, then $|C| \equiv n' + r' \not \equiv 0 \Mod 3$. Figure \ref{fig:casb} depicts an example of $C$. Therefore, $C$ must have four short chords. From the minimality of $W$, the only possible short chords in $C$ are $(t_{r'''+1}, w_{n'''+1})$, $(w_{n'-1}, t_{r'-1})$ and $(t_i, t_{i-2})$, for some $r'''+2 < i < r'$. From Lemma \ref{lema:umaunicacordadessetipo}, the chord in form $(t_i, t_{i-2})$ is unique in $T$. Hence, the circuit can only have three short chords, a contradiction. So, $(w_0, \dots, w_{n'} = t_{r'})$ has length $\equiv -r' \Mod 3$. $\blacksquare$
    
    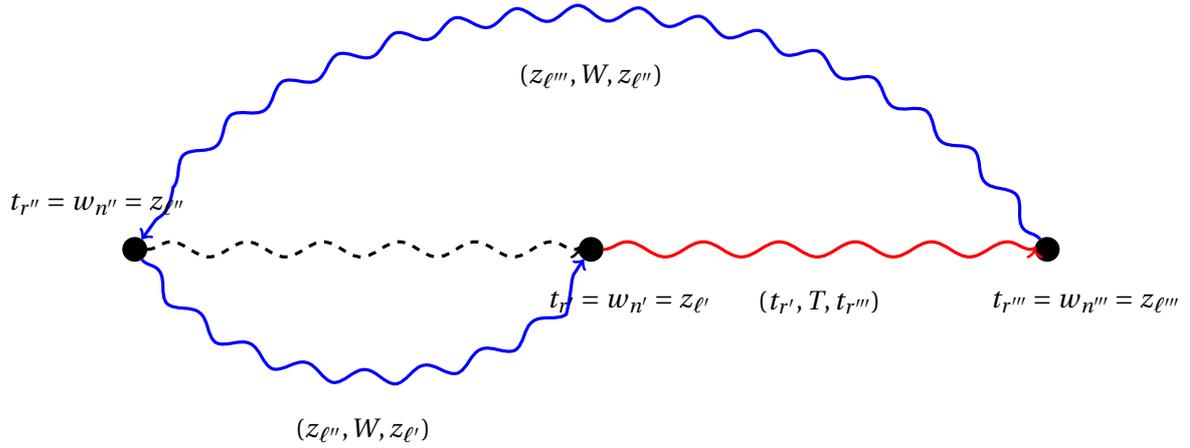
\begin{figure}
    \centering
    \begin{tikzpicture}[line width = 1.2]
                \node[vertex] (0) {};
                \node[vertex, left of = 0, xshift=-4cm] (1) {};
                \node[vertex, left of = 1, xshift=-4cm] (2) {};

                \draw[->,decorate,decoration={snake,amplitude=1mm,segment length=0.8cm,post length=1mm}, bend right = 55, blue] (0) to (2);
                \draw[->,decorate,decoration={snake,amplitude=1mm,segment length=0.8cm,post length=1mm}, bend right = 55, blue] (2) to (1);
                
                \draw[->,decorate,decoration={snake,amplitude=1mm,segment length=1cm,post length=0.3mm}, red] (1) to (0);
                \draw[->,decorate,decoration={snake,amplitude=1mm,segment length=1cm,post length=0.3mm}, black, dashed] (2) to (1);
                
                \node[below of = 0, xshift=0.5cm, yshift = 0.3cm] {$t_{r'''}  = w_{n'''} = z_{\ell'''}$};
                \node[below of = 1, xshift=0.5cm, yshift = 0.3cm] {$t_{r'} = w_{n'} = z_{\ell'}$};
                \node[above of = 2, xshift=-0.5cm, yshift = -0.4cm] {$t_{r''} =  w_{n''} = z_{\ell''}$};
                
                \node[below of = 1, xshift=3cm, yshift = 0.3cm] {$(t_{r'},T,t_{r'''})$};
                
                \node[below of = 1, yshift = 3.3cm] {$(z_{\ell'''},W,z_{\ell''})$};
                \node[above of = 2, xshift=3cm, yshift = -3.4cm] {$(z_{\ell''},W,z_{\ell'})$};

    \end{tikzpicture}
    \caption{An example depicting $C$. When concatenated, paths $(z_{\ell'''},W,z_{\ell''})$ and $(t_{r'},T,t_{r'''})$ form a circuit of length $n' - n''' + r' - r'''$. Since $n''' + r''' \equiv 0 \Mod 3$, then $|C| \equiv n' + r' \not \equiv 0 \Mod 3$.}
    \label{fig:casb}
\end{figure}

\end{itemize}

\end{proof}

Next, we present the definition of a quasi-$3$-kernel-perfect digraph and a $3$-kernel-perfect digraph. After that, we present Theorem \ref{thm:meuteorema4cc}.

\begin{defn}
A digraph $D$ is called \textbf{quasi-$3$-kernel-perfect} if every induced proper subdigraph of $D$ has a $3$-kernel.
\end{defn}

\begin{defn}
A digraph $D$ is called \textbf{$3$-kernel-perfect} if every induced subdigraph of $D$ has a $3$-kernel.
\end{defn}

\begin{thm}
\label{thm:meuteorema4cc}
Let $D$ be a strongly connected digraph that is quasi-$3$-kernel-perfect. Let $x_0 \in V(D)$. If every circuit $C$ of $D$ such that $|C| \not \equiv 0 \Mod 3$ has four short chords, then $D$ is $3$-kernel-perfect.
\end{thm}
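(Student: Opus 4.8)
The plan is to exploit the reduction that quasi-$3$-kernel-perfectness provides: every proper induced subdigraph of $D$ already has a $3$-kernel, so to conclude that $D$ is $3$-kernel-perfect it suffices to produce a single $3$-kernel of $D$ itself. I would do this with the $3$-substitution method built above. Fix the given vertex $x_0$; since $D-x_0$ is a proper induced subdigraph it has a $3$-kernel $K$, and I would form a $3$-substitution sequence $(N_0,\dots,N_{3p})$ starting at $x_0$ and $K$. This sequence is well defined: the sets $N_i$ are pairwise disjoint and $V(D)$ is finite, so the terminating index $p$ exists, and each $N_{3k+3}$ can be chosen as a $3$-kernel of $D[M_{3k+3}]$ because $M_{3k+3}$ excludes $x_0$ and hence induces a proper subdigraph. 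Let $N$ be the resulting pre-$3$-kernel. By Lemma \ref{lema:kkernelabsorventequaseindependente}, $N$ is $2$-absorbent, so the entire theorem reduces to showing that $N$ is $3$-independent.

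I would prove $3$-independence by contradiction. Suppose there is an $(N,N)$-path $P=(a,\dots,b)$ with $a\neq b$ and $|P|\leq 2$. Lemma \ref{lema:kkernelabsorventequaseindependente} then forces $a\in N_{3k'}$ and $b\in N_{3k}$ with $0\leq k'\leq k\leq p$ (a short check rules out $k=0$, so $k\geq 1$). The aim is to surround $P$ by a circuit whose length is $\not\equiv 0\Mod 3$, invoke the hypothesis that such a circuit has four short chords, and then show that at most three short chords are actually available. For this I would use Lemma \ref{lema:kkernelcaminhopropriedades} to obtain a road $T_b$ from $b$ to $x_0$, of length $3k\equiv 0\Mod 3$, and Lemma \ref{lema:kkernelcaminhoinversoaditivo}, applied to a road ending at $a$, to obtain a minimal $(x_0,a)$-path $W$ of length $\equiv -3k'\equiv 0\Mod 3$. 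Concatenating $W$, the short path $P$, and the road $T_b$ produces a closed walk of length $\equiv |P|\Mod 3$; because $|P|\in\{1,2\}$, this length is $\not\equiv 0\Mod 3$.

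The decisive and most delicate step is the chord count. After extracting a genuine circuit $C$ from the closed walk $W\cup P\cup T_b$ --- handling overlaps among the three pieces exactly as the intersection vertices $z_{\ell'}$ are handled in the proof of Lemma \ref{lema:kkernelcaminhoinversoaditivo}, and checking that this extraction preserves the residue of the length modulo three --- I would enumerate the possible short chords of $C$. Minimality of $W$ forbids short chords inside the geodesic part, and $P$ is too short to admit one; mirroring the analysis in Lemma \ref{lema:kkernelcaminhoinversoaditivo}, the only candidates are the two seam chords where $W$, $P$ and $T_b$ are joined, together with a chord of the form $(t_i,t_{i-2})$ lying on $T_b$. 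Lemma \ref{lema:umaunicacordadessetipo} guarantees that a road carries at most one such chord, so $C$ has at most three short chords, contradicting the hypothesis. Hence no short $(N,N)$-path exists, $N$ is $3$-independent, and therefore $N$ is a $3$-kernel of $D$.

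Finally I would close the loop: $D$ itself now has a $3$-kernel and every proper induced subdigraph does by assumption, so every induced subdigraph of $D$ has a $3$-kernel, i.e. $D$ is $3$-kernel-perfect. I expect the main obstacle to be precisely the chord bookkeeping in the third paragraph: verifying that passing from the closed walk to a simple circuit does not change the length class modulo three nor conjure extra short chords, and pinning down which seam arcs can actually be short. The congruence computation that makes both $W$ and $T_b$ contribute lengths $\equiv 0\Mod 3$, so that the whole circuit inherits the nonzero residue of $|P|$, is the lever that forces the four-short-chords hypothesis into a contradiction, and it must be applied at exactly the indices $3k'$ and $3k$.
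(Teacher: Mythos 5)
Your overall architecture is the same as the paper's: reduce to exhibiting one $3$-kernel of $D$, build the pre-$3$-kernel from a $3$-substitution sequence at $x_0$, invoke Lemma \ref{lema:kkernelabsorventequaseindependente} for $2$-absorbency, and, assuming a short $(N,N)$-path $P=(a,\dots,b)$ with $a\in N_{3i}$, $b\in N_{3j}$, $i<j$, form the closed walk $W\cup P\cup T_b$ of length $\equiv |P|\not\equiv 0 \Mod 3$ and cut it down to a circuit at the last intersection of the two long paths, exactly as the paper does with $z_{r'}=t_{s'}$. However, your decisive step, the chord count, has a genuine gap. You claim the only candidates are \emph{two} seam chords plus the single road chord permitted by Lemma \ref{lema:umaunicacordadessetipo}, hence at most three short chords and an immediate contradiction with the four-chord hypothesis. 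This count is wrong: the circuit has \emph{three} seams, namely the junctions at $z_{r'}=t_{s'}$, at $a$, and at $b$, each of which can be skipped by a short chord. For $|P|=1$ the candidates are $(t_{s'+1},z_{r'+1})$, $(z_{r-1},t_s)$, $(z_r,t_{s-1})$ and $(t_{i'},t_{i'-2})$ --- exactly four; for $|P|=2$ there are five raw candidates, of which $(z_r,t_s)=(a,b)$ is eliminated only because the case $|P|=1$ has already been excluded, again leaving exactly four. So counting alone cannot contradict the hypothesis: four chords are demanded and four candidates are available. (This is precisely why the theorem is stated with four short chords rather than three.)

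The paper closes the argument with a step your proposal never performs. Since all four candidates must then actually be chords, in particular the road chord $(t_{i'},t_{i'-2})$ exists, and the \emph{second} conclusion of Lemma \ref{lema:umaunicacordadessetipo} (the ``furthermore'' clause, which you do not invoke) forces $t_{s-1}\in N_{3(j-1)+2}\subseteq K$. But the seam chord at $b$ --- $(z_r,t_{s-1})$ when $|P|=1$, or $(p_1,t_{s-1})$ when $|P|=2$ --- places this vertex of $K$ at distance at most two from $a=z_r\in N_{3i}$, while $j-1\geq i$ means $N_{3(j-1)+2}$ is not among the sets $N_{3k'+1}\cup N_{3k'+2}$, $k'<i$, allowed to meet $\Delta^2(a)\cap K$ by the definition of $M_{3i}$. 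It is this incompatibility between two of the four forced chords, not a shortage of candidates, that yields the contradiction; it also explains the paper's careful choice of $a$ and $b$ with minimal indices, which your argument omits. As written, your proof would fail at exactly the point you flagged as the main obstacle.
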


\begin{proof}
Let $K$ be a $3$-kernel of $D-x_0$, let $S = (N_0, \dots, N_{3p})$ be a $3$-substitution sequence of $D$ starting at $x_0$ and $K$, let $S'$ be its sequence of intermediate sets and let $K'$ be the pre-$3$-kernel of $D$. From Lemma \ref{lema:kkernelabsorventequaseindependente} we know that $K'$ is $2$-absorbent and, if it is not $3$-independent, there is a pair $a,b \in V(D)$ such that $a \in N_{3i}$, $b \in N_{3j}$, $i<j$ and there exists a $(a,b)$-path with length $\leq 2$. To prove that $K'$ is a $3$-kernel of $D$ suffices to show that such $(a,b)$-path does not exist in $D$.

Assume, for the sake of contradiction, that there exists a $(a,b)$-path $P$ with length at most two in $D$ and choose $a \in N_{3i}$ such that $i$ is the smallest integer possible, and then choose $b \in \Delta^2(a)$ such that $b \in N_{3j}$ with the smallest $j$ possible. From Lemma \ref{lema:kkernelcaminhopropriedades}, there exists a $(b,x_0)$-path $T=(t_s=b,\dots ,t_0=x_0)$ with length $3j$. From Lemma \ref{lema:kkernelcaminhoinversoaditivo}, there exists a minimal $(x_0, a)$-path $Z = (z_0 = x_0, \dots, z_r = a)$ with length $3n$ in $D$, for some integer $n$. Note that $T \cup Z \cup P$ is a closed walk of length $\not \equiv 0 \Mod 3$. Let $r'$ be the greatest integer such that $z_{r'} = t_{s'}$, for some integer $s'\neq 1$.

Let $T' = (t_s=b, \dots, t_{s'})$ and $Z' = (z_{r'}, \dots, z_r = a)$ be paths in $D$. From Lemma \ref{lema:kkernelcaminhoinversoaditivo}, $|(t_{s'},T,t_{0} = z_0) \cup (z_0 = t_0, Z, z_{r'})| \equiv 0 \Mod 3$. Therefore, $C = T' \cup Z' \cup P$ is a circuit of length $\equiv |T \cup Z \cup P| \not \equiv 0 \Mod 3$ and has four short chords.

Assume that $|P| = 1$. Then the chords in $C$ are: $(t_{s'+1},z_{r'+1})$, $(z_{r-1},t_s)$, $(z_r,t_{s-1})$ and $(t_i',t_{i'-2}) \in A(D)$, for some $s'+2 \leq i' \leq s$. Figure \ref{fig:exempl0} depicts $C$ and its possible short chords. Since $(t_i',t_{i'-2}) \in A(D)$, it follows from Lemma \ref{lema:umaunicacordadessetipo} that $t_{s-1} \in N_{3(j-1)+2}$. So, $z_r \not \in N_{3i}$, since $d(z_r,t_{s-1}) =1$, which contradicts the definition of $N_{3i}$. 

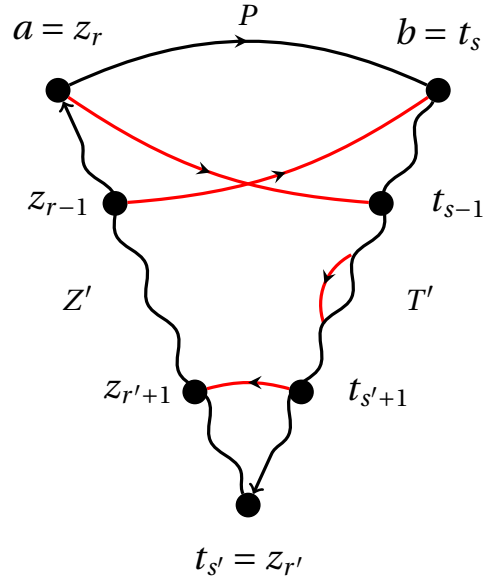
\begin{figure}
    \centering
    \begin{tikzpicture}[line width = 1.2,vertex/.append style={node distance=3cm}]
                \node[vertex] (a) {};
                \node[vertex] (b) [right of= a, xshift = 2cm] {};
                \node[vertex] (v) [below of= a, yshift = -2.5cm, xshift = 2.5cm] {};
                
                \node[vertex] (t1) [left of= v, xshift = 2.3cm, yshift=1.5cm] {};
                \node[vertex] (w1) [right of= v, xshift = -2.3cm, yshift=1.5cm] {};
                
                \node[vertex] (tr) [left of= a, xshift = 3.75cm, yshift=-1.5cm] {};
                \node[vertex] (wr) [right of= b, xshift = -3.75cm, yshift=-1.5cm] {};

                \draw[directed, bend right = 15,red]  (w1) to (t1);
                \draw[directed, bend right = 15,red]  (tr) to (b);
                \draw[directed, bend right = 15,red]  (a) to (wr);
                
                \node [left of = t1,xshift = .25cm] {\Large $z_{r'+1}$};
                \node [left of = tr,xshift = .25cm] {\Large $z_{r-1}$};
                
                \node [right of = w1] {\Large $t_{s'+1}$};
                \node [right of = wr] {\Large $t_{s-1}$};
                
                \node[below of = wr, xshift=-0.26cm, yshift=0.39cm] (meme) {};
                \node[below of =meme, xshift=-0.45cm, yshift=-0.12cm] (meme2) {};
                
                \draw[directed, bend right = 40, red] (meme) to (meme2);

                \draw[directed, bend left = 25]  (a) to (b);
                \draw[->,decorate,decoration={snake,amplitude=-1mm,segment length=1cm,post length=1mm}] (v) to (a);
                \draw[->,decorate,decoration={snake,amplitude=-1mm,segment length=1cm,post length=1mm}] (b) to (v);

                \node [above of = a,yshift = -.25cm] {\Large $a = z_r$};
                \node [above of = b, yshift = -.25cm] {\Large $b = t_{s}$};
                \node [below of = v, yshift = .25cm] {\Large $t_{s'} = z_{r'}$};
                
                \node [above of = a, yshift = 0cm, xshift= 2.5cm] {\large $P$};
                \node [below of =a, yshift = -1.75cm, xshift=.25cm] {\large $Z'$};
                \node [below of =b, yshift = -1.75cm, xshift=-.25cm] {\large $T'$};

    \end{tikzpicture}
    \caption{An example of the cycle and its possible short chords, in red.}
    \label{fig:exempl0}
\end{figure}

Assume that $|P| = 2$ and let $p_1 \in P$ be its internal vertex. Then the short chords of $C$ are: $(t_{s'+1},z_{r'+1})$, $(z_{r-1},p_1)$, $(z_r,t_{s})$, $(p_1, t_{s-1})$ and $(t_i',t_{i'-2}) \in A(D)$, for some $s'+2 \leq i' \leq s$. Since we showed that $|P| \neq 1$, $(z_r,t_{s})$ does not exist. Figure \ref{fig:exempl1} depicts $C$ and its possible short chords, while Figure \ref{fig:exempl2} depicts a particular case of $P$ which implies that the hypothesis of the theorem can not be changed to \textit{``for every cycle...''}. Analogously to the last case, since $(t_i',t_{i'-2}) \in A(D)$, it follows from Lemma \ref{lema:umaunicacordadessetipo} that $t_{s-1} \in N_{3(j-1)+2}$ and, therefore, $d(z_r,t_{s-1}) =2$. A contradiction with the definition of $N_{3i}$. We conclude that there does not exist a path $P$ in $D$ such that $|P| \leq 2$. Hence, $K'$ is a $3$-kernel of $D$. $\blacksquare$

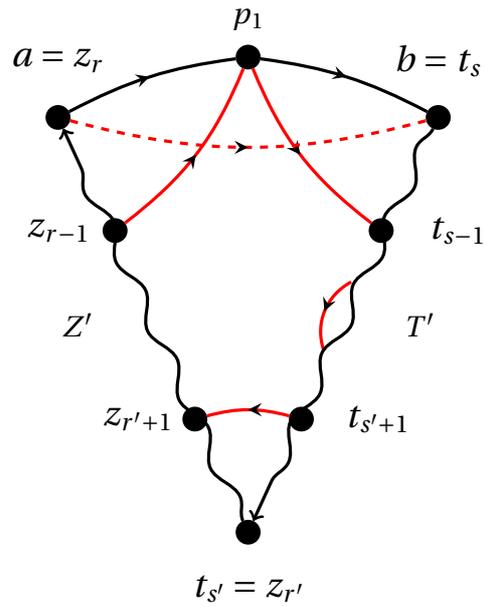
\begin{figure}
    \centering
    \begin{tikzpicture}[line width = 1.2,vertex/.append style={node distance=3cm}]
                \node[vertex] (a) {};
                \node[vertex] (b) [right of= a, xshift = 2cm] {};
                \node[vertex] (p1) [above of = a, yshift = -2.2cm, xshift= 2.5cm] {};
                \node[vertex] (v) [below of= a, yshift = -2.5cm, xshift = 2.5cm] {};
                
                \node[vertex] (t1) [left of= v, xshift = 2.3cm, yshift=1.5cm] {};
                \node[vertex] (w1) [right of= v, xshift = -2.3cm, yshift=1.5cm] {};
                
                \node[vertex] (tr) [left of= a, xshift = 3.75cm, yshift=-1.5cm] {};
                \node[vertex] (wr) [right of= b, xshift = -3.75cm, yshift=-1.5cm] {};

                \draw[directed, bend right = 15,red]  (w1) to (t1);
                \draw[directed, bend right = 15,red]  (tr) to (p1);
                \draw[directed, bend right = 15,red]  (p1) to (wr);
                \draw[directed, bend right = 15,red, dashed]  (a) to (b);
                
                \node [left of = t1,xshift = .25cm] {\Large $z_{r'+1}$};
                \node [left of = tr,xshift = .25cm] {\Large $z_{r-1}$};
                
                \node [right of = w1] {\Large $t_{s'+1}$};
                \node [right of = wr] {\Large $t_{s-1}$};
                
                \node[below of = wr, xshift=-0.26cm, yshift=0.39cm] (meme) {};
                \node[below of =meme, xshift=-0.45cm, yshift=-0.12cm] (meme2) {};
                
                \draw[directed, bend right = 40, red] (meme) to (meme2);

                \draw[directed, bend left = 10]  (a) to (p1);
                \draw[directed, bend left = 10]  (p1) to (b);
                \draw[->,decorate,decoration={snake,amplitude=-1mm,segment length=1cm,post length=1mm}] (v) to (a);
                \draw[->,decorate,decoration={snake,amplitude=-1mm,segment length=1cm,post length=1mm}] (b) to (v);

                \node [above of = a,yshift = -.25cm] {\Large $a = z_r$};
                \node [above of = b, yshift = -.25cm] {\Large $b = t_{s}$};
                \node [below of = v, yshift = .25cm] {\Large $t_{s'} = z_{r'}$};
                
                \node [above of = a, yshift = 0.3cm, xshift= 2.5cm] {\large $p_1$};
                \node [below of =a, yshift = -1.75cm, xshift=.25cm] {\large $Z'$};
                \node [below of =b, yshift = -1.75cm, xshift=-.25cm] {\large $T'$};

    \end{tikzpicture}
    \caption{An example of the cycle and its possible short chords, in red. The dashed chord $(a,b)$, can not exist.}
    \label{fig:exempl1}
\end{figure}

\begin{figure}
    \centering
    \begin{tikzpicture}[line width = 1.2,vertex/.append style={node distance=3cm}]
                \node[vertex] (a) {};
                \node[vertex] (b) [right of= a, xshift = 2cm] {};
                %\node[vertex] (p1) [above of = a, yshift = -2.2cm, xshift= 2.5cm] {};
                \node[vertex] (v) [below of= a, yshift = -2.5cm, xshift = 2.5cm] {};
                
                %\node[vertex] (t1) [left of= v, xshift = 2.3cm, yshift=1.5cm] {};
                %\node[vertex] (w1) [right of= v, xshift = -2.3cm, yshift=1.5cm] {};

                \node[vertex] (tr) [left of= a, xshift = 3.75cm, yshift=-1.5cm] {};
                %\node[vertex] (wr) [right of= b, xshift = -3.75cm, yshift=-1.5cm] {};

                %\draw[directed, bend right = 15,red]  (w1) to (t1);
                %\draw[directed, bend right = 15,red]  (tr) to (p1);
                %\draw[directed, bend right = 15,red]  (p1) to (wr);
                %\draw[directed, bend right = 15,red, dashed]  (a) to (b);
                
                %\node [left of = t1,xshift = .25cm] {\Large $z_{r'+1}$};
                \node [left of = tr,xshift = -.5cm] {\Large $z_{r-1} = p_1$};
                
                %\node [right of = w1] {\Large $t_{s'+1}$};
                %\node [right of = wr] {\Large $t_{s-1}$};
                
                %\node[below of = wr, xshift=-0.26cm, yshift=0.39cm] (meme) {};
                %\node[below of =meme, xshift=-0.45cm, yshift=-0.12cm] (meme2) {};
                
                %\draw[directed, bend right = 40, red] (meme) to (meme2);

                \draw[directed, bend left = 10]  (a) to (tr);
                \draw[directed, bend left = 10]  (tr) to (a);
                \draw[directed, bend left = 10]  (tr) to (b);
                \draw[->,decorate,decoration={snake,amplitude=-1mm,segment length=1cm,post length=1mm}] (v) to (tr);
                \draw[->,decorate,decoration={snake,amplitude=-1mm,segment length=1cm,post length=1mm}] (b) to (v);

                \node [above of = a,yshift = -.25cm] {\Large $a = z_r$};
                \node [above of = b, yshift = -.25cm] {\Large $b = t_{s}$};
                \node [below of = v, yshift = .25cm] {\Large $t_{s'} = z_{r'}$};
                
                %\node [above of = a, yshift = 0.3cm, xshift= 2.5cm] {\large $p_1$};
                \node [below of =a, yshift = -1.75cm, xshift=.25cm] {\large $Z'$};
                \node [below of =b, yshift = -1.75cm, xshift=-.25cm] {\large $T'$};
                
                \node [above of =v, yshift = 2cm] {\large $C$};

    \end{tikzpicture}
    \caption{An example of a specific case where $z_{r-1}=p_1$. The cycle $C=(t_{s'}, \dots, z_{r-1},t_s, \dots, t_{s'})$ has length multiple of three. But $(z_r,p_1,z_r)$, besides its length of two, can not have chords. If the hypothesis on the short chords were true only for cycles, then there would not be a contradiction in this case.}
    \label{fig:exempl2}
\end{figure}
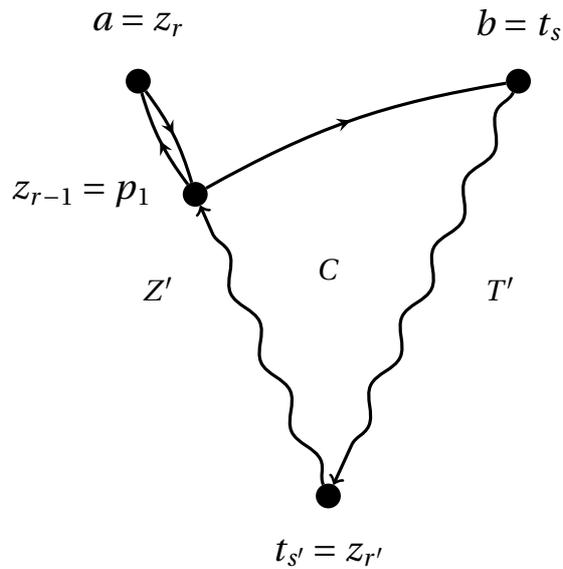
\end{proof}

\section{Concluding remarks}

We proved two new results regarding the existence of $3$-kernels and altered the substitution method for $3$-kernels. For future works, we propose the generalization of Theorem \ref{thm:meuteorema2cc} for any $k \in \mathbb{N}$ and to study the possibility of a $4$-substitution method.

\newpage

%bibliography
\bibliographystyle{abbrv}
\bibliography{sample.bib}

\end{document}